\theoremstyle{remark} \newtheorem{prop}{Theorem}
\theoremstyle{remark} \newtheorem{question}{Problem}
\theoremstyle{remark} \newtheorem{defn}{Definition}
\theoremstyle{remark} \newtheorem{rmk}{Remark}
\theoremstyle{remark} \newtheorem{lem}{Lemma}
\theoremstyle{remark} 
\theoremstyle{remark} \newtheorem{eg}{Example}
\theoremstyle{remark} \newtheorem{corr}{Corollary}
\title{Finite Uniform Bisimulations for Linear Systems with Finite Input Alphabets}
\author{Donglei Fan and Danielle C.~Tarraf\footnote{The authors are 
with the Department of Electrical \& Computer Engineering Department at 
Johns Hopkins University, Baltimore, MD, 21218  
(dfan4@jhu.edu, dtarraf@jhu.edu).}}
\date{}
\begin{document}

\maketitle

\begin{abstract}

We consider a class of systems over finite alphabets, 
namely discrete-time systems with linear dynamics and a finite input alphabet.
We formulate a notion of finite uniform bisimulation, and motivate and propose a 
notion of regular finite uniform bisimulation.
We derive sufficient conditions for the existence of finite uniform bisimulations, 
and propose and analyze algorithms to compute finite uniform bisimulations when the sufficient conditions are satisfied. 
We investigate the necessary conditions,
and conclude with a set of illustrative examples.

\emph{Index Terms}--- finite uniform bisimulations, systems over finite alphabets,  abstractions.
\end{abstract}

\color{black}
\section{Introduction}

The past decade has witnessed much interest in finite state approximations of 
hybrid systems for both 
analysis and control design. Multiple complementary
approaches have been developed and used 
\cite{JOUR:KloBel2008, JOUR:Reiszi2011, CONF:Tsumur2006a,CONF:Tsumur2007, JOUR:LiOzTM2013}
including qualitative models and $l$-complete approximations\cite{JOUR:Lunze1994,JOUR:MoRaOY2002,JOUR:MooRai1999},
approximating automata \cite{JOUR:CuKrNi1998, JOUR:ChuKro2000, JOUR:ChuKro2001},
exact or approximate bisimulation and simulation abstractions \cite{BOOK:Tabuad2009, JOUR:GoDiLB2014},
and $\rho/\mu$ approximations \cite{JOUR:TaMeDa2011, DT12, DT13}.

In particular, 
the existence of ``equivalent" finite state representations of systems with infinite memory 
has been a problem of academic interest, and continues to be so.
Typically, this equivalence is captured by the existence of 
a bisimulation relation between the original system and the finite memory model,
a concept originally introduced in the context of concurrent processes \cite{BOOKCHAPTER:Park1981}.
For instance in \cite{JOUR:LaPaSa2000}, the authors show that if a hybrid transition system is O-minimal, 
then it has a finite bisimulation quotient. 
{In \cite{Alur00}, by interpreting the trajectories of linear systems as O-minimal language structures, 
the authors present instances of linear systems which admit finite bisimulation quotients.}
In \cite{Belta}, the authors provide an algorithm for finding finite bisimulations for piecewise affine systems, and show that it can be applied to linear systems in a dead-lock free manner.
In \cite{tabuada06},  the authors show that certain controllable linear systems admit finite bisimulations.
In \cite{JOUR:GoDiLB2014} the authors propose an algorithm, 
based on polyhedral Lyapunov functions, 
to generate finite bisimulations for switched linear systems with stable subsystems.

Concurrently, bisimulation has been explored in a more traditional systems setting.
Particularly in \cite{Schaft}, 
the author discusses the connection between bisimulation relations and classical notions 
of state space equivalence and equality of external behavior in systems theory.
Specifically, he shows that a bisimulation relation between two linear time-invariant (LTI) systems exists 
if and only if their transfer matrices are identical.

In this paper, 
we revisit the question of existence of finite state equivalent models (a preliminary version of this work appeared in \cite{Fan15}),
focusing on a special class of {\it systems over finite alphabets},
namely systems with linear dynamics and finite input alphabets. 
This class of systems provides 
potential models of simple practical systems where the actuation involves multi-level switching. 
For this class of systems:

\begin{enumerate}
\item We formalize the notion of finite uniform bisimulation and discuss its connections to the existing literature,
and we propose a new notion of \emph{regular finite uniform bisimulation}.
\item We derive sufficient conditions for the existence of finite uniform bisimulations. 
\item We propose and analyze constructive algorithms for computing finite uniform bisimulations when the sufficient conditions are satisfied. 
\item We explore the question of necessity and derive a set of necessary conditions, 
highlighting the relevance of the regularity property of finite uniform bisimulations.
\item We provide a set of examples, thereby illustrating how existence of finite uniform bisimulations can be exploited to derive an ``equivalent" deterministic finite state machine model for the system.
\end{enumerate}

\color{black}
\emph{Paper Organization}:
We formulate the notion of finite uniform bisimulation and propose that of regular finite uniform bisimulation in Section \ref{sec:Defn}.
We describe the class of systems of interest, pose our problem,
and place our work in the context of existing literature in Section \ref{sec:PF}. 
We state our main analytical results in Section \ref{sec:results}
and present the corresponding constructive algorithms in Section \ref{sec:algo}.
We present a full derivation of the analytical results and an analysis of our constructive algorithms in Section \ref{sec:derivation}.
We present a set of illustrative examples in Section \ref{sec:eg}
and conclude with directions for future work in Section \ref{sec:5}.

\emph{Notation}: 
We briefly summarize our notation here and, for completeness, 
we provide a review of all relevant concepts in the Appendix.
We use $\mathbb{N}$ to denote the non-negative integers, $\mathbb{Z}_+$ to denote the positive integers, $\mathbb{Q}$ to denote the rationals, $\mathbb{R}$ to denote the reals, and $\mathbb{C}$ to denote the complex numbers. 
For a set $\mathcal{A} \subset \mathbb{R}^n$, 
we use $|\mathcal{A}|$ to denote its cardinality (which could be infinite), 
$diam(\mathcal{A})$ to denote its diameter,
$\mathcal{A}^c$ to denote its complement, $cl(\mathcal{A})$ to denote its closure, 
$int(\mathcal{A})$ to denote its interior, and $\partial \mathcal{A}$ to denote its boundary.
{A pair $\mathcal{W}$ and $\mathcal{V}$ of disjoint, nonempty, open sets in $\mathbb{R}^n$ 
is a {disconnection} of $\mathcal{A}$ if $\mathcal{W} \cap \mathcal{A} \neq \varnothing$, $\mathcal{V} \cap \mathcal{A} \neq \varnothing$ and $\mathcal{A} \subset \mathcal{W} \cup \mathcal{V}$, and we say $\mathcal{A}$ is not connected if there is a disconnection of $\mathcal{A}$. }
For $v \in \mathbb{R}^n$, {we use $\|v\|_1$ to denote its $1$-norm.} We use $B_r(v)$ to denote the open ball centered at $v$ with radius $r$.
For a square matrix $A$, we use $\|A\|_1$ to denote its $1$-induced norm and $\rho(A)$ to denote its spectral radius. 
For sets $\mathcal{S}, \mathcal{R} \subset \mathbb{R}^n$, a matrix $H \in \mathbb{R}^{n \times n}$ and a vector $v \in \mathbb{R}^n$, we use $H\mathcal{S}$ to denote the set $\{z \in \mathbb{R}^n| z = Hx, \ \textrm{for some}  \ x \in \mathcal{S}\}$, use $v+\mathcal{S}$ to denote the set $\{z \in \mathbb{R}^n| z = v + x, \ \textrm{for some} \ x \in \mathcal{S}\}$, and use $\mathcal{S} + \mathcal{R}$ to denote the set $\{x + r | x \in \mathcal{S}, r \in \mathcal{R} \}$. We use $\sim$ to denote an equivalence relation, $x \sim y$ to denote that $x$ is equivalent to $y$, $x \nsim y$ to denote that $x$ is not equivalent to $y$, and $[x]$ to denote the equivalence class of $x$. 
For completeness, we provide detailed definitions of all our notation in the Appendix.

\section{Finite Uniform Bisimulations}
\label{sec:Defn}

\subsection{Proposed Notions}
We begin by defining the notion of {\it finite uniform bisimulation}, which is simply an equivalence relation that satisfies certain desired properties:

\begin{defn}
\label{def:FB}
Consider a discrete-time system
\begin{equation}
\label{eq:gplant}
x_{t+1} = f(x_t,u_t)
\end{equation}
where $t \in \mathbb{N}$ is the time index, $x_t \in \mathbb{R}^{n}$ is the state, $u_t \in \mathcal{U}$ is the input,
$f : \mathbb{R}^n \times \mathcal{U} \rightarrow \mathbb{R}^n$ is given,
and input alphabet $\mathcal{U}$ represents the collection of possible values of the input.
Given a set $\mathcal{S} \subset \mathbb{R}^n$, 
we say an equivalence relation $\sim \subset \mathcal{S} \times \mathcal{S}$ is a \emph{finite uniform bisimulation on $\mathcal{S}$} 
if the following two conditions are satisfied:\\
(i) For any $x, x' \in \mathcal{S}$ and any $u \in \mathcal{U}$, if $x \sim x'$, then
\begin{equation}
\label{def:Dt}
f(x,u) \sim f(x',u)
\end{equation}
(ii) For $x \in \mathcal{S}$ with $[x] = \{y \in \mathcal{S}| y \sim x\}$, we have
\begin{equation}
\label{def:Fn}
1 < |\{[x]| x \in \mathcal{S}\}| < \infty
\end{equation}
\end{defn}

Essentially \eqref{def:Dt} requires that each equivalence class transition into another equivalence class under {any} input, 
and \eqref{def:Fn} requires that there be a finite number of equivalence classes while avoiding the trivial instance of a single equivalence class.

We define a finite uniform bisimulation to be \emph{regular} if the equivalence classes have a specific topological structure:

\begin{defn}
\label{def:rFB}
Given a finite uniform bisimulation $\sim$ on $\mathcal{S}$ of system \eqref{eq:gplant}, 
we say $\sim$ is \emph{regular} if for all $x \in \mathcal{S}$, $[x] = \{y \in \mathcal{S}| y \sim x\}$ consists of open sets  in $\mathbb{R}^n$ and possibly their boundary points. 
\end{defn}

We are interested in regular finite uniform bisimulations because we wish to avoid certain ``pathological" finite uniform bisimulations, 
as will become clear when we discuss the necessary conditions for the existence of finite uniform bisimulations in Section \ref{sec:neccon}.  

\subsection{Deterministic Finite State Bisimulation Models}

Given a finite uniform bisimulation $\sim$ on $\mathcal{S}$ of system \eqref{eq:gplant}, it is straightforward to construct a deterministic finite state machine (DFM) that is bisimilar to the original system when the latter is restricted to evolve on $\mathcal{S}$. Indeed: 

\begin{defn}
Given a system \eqref{eq:gplant} denoted by $P$ and a finite uniform bisimulation $\sim$ on $\mathcal{S}$ of $P$, consider the DFM $\hat{P}$ defined by 
\begin{equation}
\label{eq:dfm}
q_{t+1} = f_\sim (q_t, u_t),
\end{equation}
where $t \in \mathbb{N}$ is the time index, $q_t \in \mathcal{Q}$ is the state, $u_t \in \mathcal{U}$ is the input, 
$\mathcal{Q} = \{[x] | x \in \mathcal{S}\}$ (essentially $\mathcal{Q}$ is the finite quotient set of $\mathcal{S}$ under equivalence relation $\sim$), $\mathcal{U}$ is the input alphabet of system \eqref{eq:gplant},
and state transition function $f_\sim : \mathcal{Q} \times \mathcal{U} \to \mathcal{Q}$ is defined as
\begin{equation}
\label{eq:fsmdef}
f_\sim (q , u ) = [f(x,u)], \ \forall \ x \in q.
\end{equation}
We say that $\hat{P}$ is \emph{uniformly bisimilar} to $P$.
\end{defn}

Note that since $\sim$ is a finite uniform bisimulation, 
it follows from (\ref{def:Dt}) that $f_\sim$ is well-defined.

\section{Problem Setup and Formulation}
\label{sec:PF}

\subsection{Systems of Interest \& Problem Statement}

We first introduce the specific class of systems \eqref{eq:gplant} that we will study in this paper.
Consider a discrete-time dynamical system described by
\begin{equation}
\label{eq:Plant}
x_{t+1} = A x_t + B u_t,
\end{equation}
where $t \in \mathbb{N}$ is the time index, $x_t \in \mathbb{R}^{n}$ is the state, $u_t \in \mathcal{U}$ is the input, and $A \in \mathbb{R}^{n \times n}$ and $B \in \mathbb{R}^{n \times m}$ are given. 
We enforce that the input $u_t$ can only take \emph{finitely} many values in $\mathcal{U} \subset \mathbb{R}^m$ (that is, $|\mathcal{U}| < \infty$). 

For this class of systems, we are interested in questions of existence and construction of 
finite uniform bisimulations on a subset $\mathcal{S}$ of the state space $\mathbb{R}^n$. 
Particularly,
in order for the bisimulation relation to yield a meaningful ``equivalent" DFM, 
we require the set $\mathcal{S}$ be an invariant set of the system: 

\begin{defn}
A set $\mathcal{S} \subset \mathbb{R}^n$ is an \emph{invariant set} of system \eqref{eq:Plant} if for any input sequence $\{u_t\}_{t = 0}^\infty \in \mathcal{U}^\mathbb{N}$
\begin{equation}
x_0 \in \mathcal{S} \Rightarrow x_t \in \mathcal{S},  \ \textrm{for all} \ t \in \mathbb{N}.
\end{equation}
\end{defn}

We are now ready to state the first problem of interest:

\begin{question}
\label{pr:problem}
Given system \eqref{eq:Plant}, under what conditions on $A, B, \mathcal{U}$ does there exist a finite uniform bisimulation $\sim$ on {some} invariant set $\mathcal{S}$ of system \eqref{eq:Plant}?
\end{question}

When Problem \ref{pr:problem} has an affirmative answer, 
another set of problems naturally follows: 

\begin{question}
\label{pr:many}
Given a system \eqref{eq:Plant} that admits a finite uniform bisimulation on some invariant set $\mathcal{S}$, under what conditions on $A, B, \mathcal{U}$ can \emph{an arbitrarily large number of} equivalence classes 
be generated by a finite uniform bisimulation?
\end{question}

Note that we seek (and propose) both analytical and constructive, algorithmic solutions to the above problems.

\subsection{Comparison with Existing Work on Finite Bisimulations}
\label{sec:discuss} 

Before presenting our main results, we briefly discuss the similarities and differences between the current problem of interest and some of the previous developments on finite bisimulations: 

\begin{itemize}

\item Our definition of finite uniform bisimulation is stronger than that of finite bisimulation used in some of the literature, of which we pick \cite{tabuada06} as a representative paper. 
In particular in that setting, 
the definition requires that if two states are bisimilar  ($x \sim y$)
and $x$ transitions to $x'$ under input $u$,
then there exists an input $u'$ such that $y$ transition to $y'$ under $u'$ and $y' \sim y$.
Note that $u$ and $u'$ need not be the same, and thus a finite bisimulation as in \cite{tabuada06}
is not necessarily a finite uniform bisimulation.
We will use Example \ref{eg:Tab} in Section \ref{sec:eg} to illustrate this difference.

\item Our definition of finite uniform bisimulation is in accordance with the definitions of finite bisimulation introduced in \cite{JOUR:LaPaSa2000,JOUR:GoDiLB2014}.
However, the sufficient conditions for existence of finite bisimulations derived in \cite{JOUR:LaPaSa2000} concern linear vector fields, and as such correspond to special cases of \eqref{eq:Plant} where $B$ is the zero matrix, whereas the present contribution addresses the more general case where $B$ is nonzero.
Likewise, the dynamics of the system of interest in \cite{JOUR:GoDiLB2014} are different, 
as the authors study systems of the form $x_{t+1} = A_{\sigma(t)}x_t$, 
where $\sigma(t)$ is the switching signal and is considered to be the input.

\item Finally, the finite input alphabet setup is unique in the literature,
in contrast to typically studied setups where the input signal takes arbitrary instantaneous values in 
Euclidean space, 
or else the input signal is of certain form such as polynomial, exponential or sinusoidal as in \cite{Alur00}.

\end{itemize}


\section{Statement of Main Results}
\label{sec:results}

\subsection{Sufficient Conditions and Construction}
\label{sec:conditions}

We begin by defining a set that will be useful for formulating a sufficient condition for the existence of finite uniform bisimulations.

\begin{defn}
Given system \eqref{eq:Plant}, define set $\mathcal{A}$ as
\begin{equation}
\label{eq:set-A}
\mathcal{A} = \{\alpha \in \mathbb{R}^n| \alpha = \sum_{\tau = 0}^{t} A^{t-\tau}Bu_\tau, u_{(\cdot)} \in \mathcal{U}, t \in \mathbb{N} \}.
\end{equation}
\end{defn}

Essentially, $\mathcal{A}$ is the collection of forced responses of system \eqref{eq:Plant}. Now, we are ready to propose a sufficient condition for the existence of finite uniform bisimulations on some invariant subset of the state space. 
 
\begin{prop}
\label{sufcon}
Given system \eqref{eq:Plant} with $0 \in \mathcal{U}$, assume that $A$ has all eigenvalues within the unit disc.  If  $cl(\mathcal{A})$ is not connected, then there exists a finite uniform bisimulation on a subset of $\mathbb{R}^n$ that is an invariant set of system \eqref{eq:Plant}.
\end{prop}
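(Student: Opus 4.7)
The plan is to take $\mathcal{S}=cl(\mathcal{A})$ as the invariant set and declare $x\sim y$ iff, for every input sequence of length at most some finite depth $T$ depending only on the data, the two trajectories agree on which side of the disconnection they lie on at each step. The Schur hypothesis is precisely what allows $T$ to be taken finite, yielding only finitely many equivalence classes.

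First I would collect structural facts about $cl(\mathcal{A})$. Because $\rho(A)<1$, $\sum_{k\geq 0}\|A^{k}\|_1<\infty$, so every element of \eqref{eq:set-A} has $1$-norm bounded by a fixed constant, hence $cl(\mathcal{A})$ is compact. Re-indexing the defining sum shows $A\mathcal{A}+B\mathcal{U}\subseteq\mathcal{A}$, so $\mathcal{A}$ is an invariant set of \eqref{eq:Plant}, and continuity of $f_u(x)=Ax+Bu$ lifts invariance to $cl(\mathcal{A})$. Given a disconnection $(\mathcal{W},\mathcal{V})$, set $K_1=\mathcal{W}\cap cl(\mathcal{A})$ and $K_2=\mathcal{V}\cap cl(\mathcal{A})$; a short topological argument (any limit point of $K_1$ lies in $cl(\mathcal{A})\subset\mathcal{W}\cup\mathcal{V}$ and cannot lie in the open set $\mathcal{V}$ without meeting $K_1$ there, contradicting $\mathcal{W}\cap\mathcal{V}=\varnothing$) shows that $K_1$ and $K_2$ are closed in $\mathbb{R}^n$, hence compact; they are nonempty and disjoint, so $d:=d(K_1,K_2)>0$.

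Next comes the quantitative step. Since $\|A^{k}\|_1\to 0$ (Gelfand's formula together with $\rho(A)<1$), I may pick $T\in\mathbb{Z}_{+}$ with $\|A^{T}\|_1\cdot diam(cl(\mathcal{A}))<d$. Define the signature
\[
\Phi(x)_{t,\vec{u}}=i \ \ \text{when} \ \ A^{t}x+\sum_{\tau=0}^{t-1}A^{t-1-\tau}Bu_{\tau}\in K_{i},
\]
indexed by $t\in\{0,\ldots,T-1\}$ and $\vec{u}=(u_0,\ldots,u_{t-1})\in\mathcal{U}^{t}$, so $\Phi:cl(\mathcal{A})\to\{1,2\}^{N}$ with $N=\sum_{t=0}^{T-1}|\mathcal{U}|^{t}$. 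Declare $x\sim y$ iff $\Phi(x)=\Phi(y)$; this is an equivalence relation with at most $2^{N}$ classes, and with at least two classes because $K_1, K_2$ are nonempty and any $x\in K_1$, $y\in K_2$ already differ at $t=0$.

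It remains to verify the transition property: for $x\sim y$ and $v\in\mathcal{U}$, show $Ax+Bv\sim Ay+Bv$. For $t\leq T-2$ and $\vec{u}\in\mathcal{U}^{t}$, a direct index shift gives $\Phi(Ax+Bv)_{t,\vec{u}}=\Phi(x)_{t+1,(v,\vec{u})}$, which matches $\Phi(Ay+Bv)_{t,\vec{u}}$ by hypothesis. The only uncontrolled coordinates sit at $t=T-1$ and record $K_{i}$-membership of the time-$T$ states; those states differ by $A^{T}(x-y)$ with $\|A^{T}(x-y)\|_1\leq \|A^{T}\|_1\cdot diam(cl(\mathcal{A}))<d$, so both lie in $cl(\mathcal{A})=K_1\cup K_2$ closer together than $d(K_1,K_2)$ and therefore share a $K_{i}$. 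I expect this closure step at $t=T-1$ to be the main obstacle and the heart of the argument: a naive signature (say just $\{K_1,K_2\}$) need not respect the dynamics because each $K_{i}$ can itself be disconnected, while an infinite-depth signature would generate uncountably many classes; the Schur contraction $\|A^{T}\|_1\to 0$ is exactly what allows a finite truncation to be both small and compatible with the one-step transitions.
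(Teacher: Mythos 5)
Your argument is correct, and it reaches the conclusion by a genuinely different construction than the paper's. The paper first shows (Lemma \ref{lem:sjwhole}) that at some depth $k^*$ every cylinder set $\mathcal{S}_j^{k^*}$ lies entirely in one piece of the disconnection --- the same fact your inequality $\|A^{T}\|_1\, diam(cl(\mathcal{A}))<d(K_1,K_2)$ delivers quantitatively, though the paper extracts $k^*$ by a compactness/contradiction argument --- and then, in Lemma \ref{lem:sjwhole1}, pulls the disconnection back through the dynamics $k^*-1$ times to obtain a new disconnection $(\mathcal{W}',\mathcal{V}')$ for which the depth-one cylinders $\mathcal{S}_j^{1}$ are already monochromatic. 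The bisimulation is then the two-class partition $\mathcal{A}\cap\mathcal{W}'$, $\mathcal{A}\cap\mathcal{V}'$, and condition \eqref{def:Dt} holds for the strong reason that under a fixed input $u_j$ \emph{every} point of $\mathcal{A}$ lands in the single piece containing $\mathcal{S}_j^{1}$, equivalent or not. You instead keep the original disconnection and refine it: your signature $\Phi$ is the depth-$T$ future-indistinguishability quotient, the index shift disposes of levels $t\le T-2$, and the contraction estimate closes the last level (where, as you implicitly use, agreement holds for \emph{all} pairs $x,y\in cl(\mathcal{A})$, not just equivalent ones). This trades the paper's exactly two classes for up to $2^{N}$ classes, but entirely avoids the pullback lemma --- the most delicate step of the paper's derivation, involving the preimages $f^{-1}(-Bu+\mathcal{W})$ --- and it never needs $0\in\mathcal{U}$. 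Both arguments ultimately rest on the same two pillars: compactness of $cl(\mathcal{A})$ gives a positive gap between the two pieces, and Schur stability makes $\|A^{k}\|_1\to 0$ so that a finite horizon dominates that gap.
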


When the conditions in Theorem \ref{sufcon} are satisfied, we also propose an algorithm for computing finite uniform bisimulations. To keep the flow of presentation, we present this algorithm in the following section (Algorithm \ref{algo} in Section \ref{sec:algo}). We show that Algorithm \ref{algo} is guaranteed to generate a finite uniform bisimulation when the sufficient condition is satisfied. 

\begin{prop}
\label{prop:constr}
Given system \eqref{eq:Plant}, and let the hypothesis in Theorem \ref{sufcon} hold, then Algorithm \ref{algo} terminates, and returns $\mathcal{X}_1, \mathcal{X}_2$ such that $\mathcal{X}_1, \mathcal{X}_2$ afford a finite uniform bisimulation on an invariant set $\mathcal{S}$, namely $\mathcal{S} = \mathcal{X}_1 \cup \mathcal{X}_2$, of system \eqref{eq:Plant}.
\end{prop}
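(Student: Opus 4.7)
The plan is to first establish the fundamental properties of $\mathcal{A}$ that underlie the construction, then verify the bisimulation conditions, and finally argue termination. Since $\rho(A) < 1$ and $\mathcal{U}$ is finite, a geometric-series bound on $\sum_{\tau=0}^{t} A^{t-\tau} B u_\tau$ shows that $\mathcal{A}$ is bounded, so $cl(\mathcal{A})$ is compact. Moreover, $\mathcal{A}$ is forward invariant under $f(\cdot, u)$ for every $u \in \mathcal{U}$: for any $\alpha = \sum_{\tau = 0}^{t} A^{t - \tau} B u_\tau \in \mathcal{A}$ and any $u \in \mathcal{U}$, the point $A\alpha + Bu$ arises by appending $u$ to the input sequence and therefore lies in $\mathcal{A}$. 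By continuity of the affine map $f(\cdot, u)$, this lifts to forward invariance of $cl(\mathcal{A})$. Compactness together with disconnectedness then yields a disconnection $(\mathcal{W}, \mathcal{V})$ for which $cl(\mathcal{A}) \cap \mathcal{W}$ and $cl(\mathcal{A}) \cap \mathcal{V}$ are separated by a strictly positive distance, and both are nonempty; note in particular that $0 \in \mathcal{A}$ since $0 \in \mathcal{U}$, so neither side is vacuous.

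Next, I would verify the bisimulation conditions of Definition \ref{def:FB} on $\mathcal{S} = \mathcal{X}_1 \cup \mathcal{X}_2$, under the assumption (which I expect Algorithm \ref{algo} to enforce) that $\mathcal{X}_1, \mathcal{X}_2$ are obtained as unions of connected components of $cl(\mathcal{A})$, one sitting inside $\mathcal{W}$ and the other inside $\mathcal{V}$. Invariance of $\mathcal{S}$ then follows directly from the forward invariance of $cl(\mathcal{A})$. The key partition-preservation property, condition (i) of Definition \ref{def:FB}, follows from the observation that $f(\mathcal{X}_i, u) = A\mathcal{X}_i + Bu$ is the image of each connected piece of $\mathcal{X}_i$ under the continuous map $f(\cdot, u)$, hence connected and contained in $cl(\mathcal{A})$; by connectedness it must sit entirely inside a single one of $\mathcal{X}_1$ or $\mathcal{X}_2$. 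The cardinality condition is trivial since there are exactly two classes.

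For termination, the argument will rely on two finiteness facts: first, $cl(\mathcal{A})$ has only finitely many connected components, a consequence of compactness combined with the uniform separation inherited from the disconnection; second, any refinement loop in Algorithm \ref{algo} either strictly decreases a finite-valued progress measure (such as the number of candidate classes remaining to be merged, or a count of mismatched transitions) or halts when the candidate partition is already closed under the dynamics for every $u \in \mathcal{U}$. Combined, these guarantee that the loop cannot run indefinitely.

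The main obstacle I anticipate is rigorously tying Algorithm \ref{algo}'s concrete operations --- which presumably manipulate finite descriptions of the sets by iterating on specific forced-response trajectories, rather than the abstract closure $cl(\mathcal{A})$ --- to the topological partition-preservation argument above. The subtlety is that one must certify, at termination, that each $\mathcal{X}_i$ is genuinely closed under $f(\cdot, u)$ for every $u \in \mathcal{U}$, and not merely for the finitely many representatives inspected during the run. I expect this to be handled by exploiting the strict positive separation between the two components, so that once every generator sits on one designated side, continuity of $f(\cdot, u)$ combined with compactness forces the entire orbit to stay on that side.
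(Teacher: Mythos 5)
Your proof attacks a different object than the one the theorem is about, and the substitution hides the two hardest steps. Algorithm \ref{algo} does \emph{not} return unions of connected components of $cl(\mathcal{A})$: it returns $\mathcal{X}_i = \mathcal{C}_i^{(\tilde i)} + \mathcal{S}$, i.e.\ finite unions of small balls (in the $\|T^{-1}(\cdot)\|_1$ norm) centered at the depth-$\tilde k$ truncated forced responses, and these sets need not even be contained in $cl(\mathcal{A})$. Moreover, your fallback premise that ``$cl(\mathcal{A})$ has only finitely many connected components'' by compactness plus separation is false in general --- the attainable set can be Cantor-like, compact with uncountably many components. Most importantly, your verification of condition (i) of Definition \ref{def:FB} does not go through even on its own terms: showing that \emph{each connected piece} of $\mathcal{X}_1$ has a connected image landing wholly in one class does not show that two equivalent points lying in \emph{different} pieces of $\mathcal{X}_1$ land in the \emph{same} class under the same input $u$, which is precisely what (i) demands. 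The paper closes this gap with Lemma \ref{lem:sjwhole1}: the disconnection can be refined to $(\mathcal{W}',\mathcal{V}')$ so that each set $\mathcal{S}_j^1 = Bu_j + cl(A\mathcal{A})$ --- the image of the \emph{entire} attainable set under input $u_j$ --- lies wholly on one side. That statement (proved by pulling back a depth-$k^*$ separation through $A^{-1}$-preimages $k^*-1$ times) is the missing idea; without it the transition-consistency across pieces is unproven.

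Your termination argument is also not anchored to the actual loop. Algorithm \ref{algo} increments $k$ and tests $d_k^{(i)} \ge \kappa l_k$ over binary partitions of $\mathcal{U}$; there is no merging or progress measure. The real mechanism is: the refined disconnection induces a specific binary partition $(\mathcal{U}_1^*,\mathcal{U}_2^*)$ of the inputs for which $\mathcal{C}_1^{(i^*)}$ and $\mathcal{C}_2^{(i^*)}$ remain inside two fixed disjoint compact sets $\mathcal{G}_1,\mathcal{G}_2$ for \emph{all} $k$, so $d_k^{(i^*)} \ge d > 0$ uniformly, while $l_k = h\|A^k\|_1 \to 0$ by Schur stability; hence the exit test eventually fires. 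Finally, your sketch leaves unaddressed the quantitative half of correctness: the estimate $\|T^{-1}(A^{\tilde k}Bu_{\tilde k} + As)\|_1 < d_{\tilde k}^{(\tilde i)}/(2\|T\|_1)$ showing the ``tail'' stays in $\mathcal{S}$ (which simultaneously gives invariance of $\mathcal{X}_1 \cup \mathcal{X}_2$ and the fact that the destination class depends only on $u$), and the triangle-inequality argument that $\mathcal{X}_1 \cap \mathcal{X}_2 = \varnothing$, without which $\sim$ is not even a well-defined two-class equivalence relation. Your closing remark correctly identifies this as ``the main obstacle,'' but the appeal to continuity and separation does not substitute for these computations.
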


Next, we continue to study Problem \ref{pr:many}. 
It turns out that additional assumptions are needed to guarantee the existence of  arbitrarily many equivalence classes, as we shall see in Section \ref{sec:eg} Example \ref{eg:addn}. In order to describe such conditions, 
we first define a relevant collection of subsets of the state space $\mathbb{R}^n$: 
Given system \eqref{eq:Plant}, 
let $\mathcal{U} = \{u_1, u_2,\dots,u_q\}$ for $q \in \mathbb{Z}_+$ and 
define sets $\{\mathcal{S}_j^1\}_{j=1}^q$ as follows
\begin{equation}
\label{eq:sjone}
\mathcal{S}_j^1 = Bu_j + cl(A\mathcal{A}), \quad j = 1,2,\dots,q.
\end{equation}
We can now propose a sufficient condition for the existence of an arbitrarily large number of equivalence classes.

\begin{prop}
\label{sufcons}
Given system \eqref{eq:Plant} with $0 \in \mathcal{U}$ and $|\mathcal{U}| > 1$, assume that $A$ has all eigenvalues within the unit disc. If $A$ is invertible, and $\{S_j^1\}_{j=1}^q$ \eqref{eq:sjone} are disjoint, then for any $z \in \mathbb{Z}_+$ there is a finite uniform bisimulation $\sim$ of system \eqref{eq:Plant} such that the number of equivalence classes associated with $\sim$ is greater than $z$.
\end{prop}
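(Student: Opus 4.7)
The plan is to construct, for every $k \in \mathbb{Z}_+$, a finite uniform bisimulation with $q^k$ equivalence classes on the invariant set $\mathcal{S} = cl(\mathcal{A})$; since $q = |\mathcal{U}| \geq 2$ by hypothesis, one then chooses $k$ so that $q^k > z$. The underlying intuition is that disjointness of $\{\mathcal{S}_j^1\}$ says the ``last input'' applied is recoverable from the current state, and invertibility of $A$ lets us iterate this observation so that the ``last $k$ inputs'' are also recoverable, with equivalence classes labelled by the corresponding input strings of length $k$.

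First I would define, for each $k \in \mathbb{Z}_+$ and each $(j_1,\ldots,j_k) \in \{1,\ldots,q\}^k$,
\[
\mathcal{S}_{j_1 \cdots j_k}^k \;=\; \sum_{i=1}^{k} A^{i-1} B u_{j_i} \;+\; cl(A^k \mathcal{A}),
\]
and establish the recursion $\mathcal{S}_{j_1 \cdots j_{k+1}}^{k+1} = B u_{j_1} + A\, \mathcal{S}_{j_2 \cdots j_{k+1}}^{k}$. I would then prove by induction on $k$ that $\{\mathcal{S}_{j_1 \cdots j_k}^k\}$ is a pairwise-disjoint cover of $cl(\mathcal{A})$. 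The covering step reduces to the identity $cl(\mathcal{A}) = \bigcup_{j=1}^q \mathcal{S}_j^1$, which follows from $\mathcal{A} = B\mathcal{U} + A\mathcal{A}$ (using $0 \in \mathcal{U}$) and the commutation of finite unions with closure. Disjointness is the technical heart: at level $k+1$, two cells with distinct $j_1$ lie inside the disjoint level-1 cells $\mathcal{S}_{j_1}^1$ and $\mathcal{S}_{j_1'}^1$ and are therefore disjoint by hypothesis; two cells sharing $j_1$ are disjoint by the induction hypothesis combined with the recursion, since the map $y \mapsto Bu_{j_1} + Ay$ is a bijection (invertibility of $A$). Throughout, I would use $A \cdot cl(\mathcal{A}) = cl(A\mathcal{A})$, which holds because $A$ invertible is a homeomorphism so closure commutes with the image map.

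Next I would declare $x \sim_k x'$ if and only if $x$ and $x'$ lie in the same level-$k$ cell, and verify that $\sim_k$ is a finite uniform bisimulation on $cl(\mathcal{A})$. Invariance of $cl(\mathcal{A})$ under $x \mapsto Ax+Bu$ follows from $A\mathcal{A} + B\mathcal{U} \subseteq \mathcal{A}$ together with continuity. The essential ``shift'' computation is that for any $x \in \mathcal{S}_{j_1\cdots j_k}^k$ and any $u_m \in \mathcal{U}$, direct expansion together with $\mathcal{S}_{j_k}^1 \subseteq cl(\mathcal{A})$ yields $Ax + Bu_m \in \mathcal{S}_{m j_1 \cdots j_{k-1}}^{k}$. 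Hence $x \sim_k x'$ implies $Ax+Bu_m \sim_k Ax'+Bu_m$, giving condition (i) of Definition \ref{def:FB}. Condition (ii) is immediate: each of the $q^k$ cells is nonempty (it contains $\sum_{i=1}^k A^{i-1} Bu_{j_i}$ because $0 \in cl(\mathcal{A})$) and they are pairwise disjoint, so there are exactly $q^k$ equivalence classes, which is finite and at least $2$ for $k \geq 1$.

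The main obstacle I anticipate is the bookkeeping with closures together with the inductive disjointness argument. The identity $A \cdot cl(\mathcal{A}) = cl(A\mathcal{A})$, which requires invertibility of $A$, underlies both the recursion for $\mathcal{S}_{j_1\cdots j_k}^k$ and the shift computation, and it is precisely this identity that allows the hypothesis of disjoint $\{\mathcal{S}_j^1\}$ to propagate forward to every level $k$. Once the partition is in hand, the final step---picking $k$ with $q^k > z$---is trivial.
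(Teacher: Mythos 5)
Your argument is correct, and it reaches the conclusion by a genuinely different route from the paper. The paper proves Theorem \ref{sufcons} constructively through Algorithm \ref{algo1}: it first runs the separation test $d_k \ge \kappa l_k$ to find a depth $\tilde k$ at which the finite point clouds $\mathcal{C}_1^{(\tilde k)},\dots,\mathcal{C}_q^{(\tilde k)}$ are far enough apart that fattening each by a small open set $\mathcal{S}$ yields $q$ disjoint classes $\bar{\mathcal{X}}_j$ (reusing the machinery of Theorem \ref{prop:constr}), and then refines these to the $q^{\eta+1}$ classes $Bu_1+\cdots+A^{\eta-1}Bu_\eta+A^\eta\bar{\mathcal{X}}_i$, establishing disjointness of the refinement by essentially the same induction you use (invertibility of $A$ to push disjointness through $y\mapsto Bu+Ay$, and containment in disjoint coarser cells when the leading input symbols differ). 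You instead partition the natural invariant set $cl(\mathcal{A})$ directly into the $q^k$ cells $\mathcal{S}^k_{j_1\cdots j_k}$ of \eqref{eq:sjkdefn}, showing that disjointness of $\{\mathcal{S}_j^1\}_{j=1}^q$ in \eqref{eq:sjone} together with invertibility of $A$ (so that $A\,cl(\mathcal{A})=cl(A\mathcal{A})$ and $A$ is injective) forces these cells to be a pairwise disjoint cover, and the shift computation $Ax+Bu_m\in\mathcal{S}^k_{m j_1\cdots j_{k-1}}$ gives condition (i) of Definition \ref{def:FB}. Your route is shorter, avoids the stopping criterion and the two-stage construction, and in fact never uses Schur stability. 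What the paper's construction buys in exchange is that its equivalence classes are open (hence the bisimulation is regular in the sense of Definition \ref{def:rFB}), that their diameters can be made arbitrarily small, and that the whole procedure is an implementable, provably terminating algorithm, which is what Corollary \ref{prop:many} requires; your closed cells need not enjoy these properties, but none of them is demanded by the statement of Theorem \ref{sufcons} itself.
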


We also propose an algorithm, which is an extension of Algorithm \ref{algo}, to compute many equivalence classes.
\begin{corr}
\label{prop:many}
Given system \eqref{eq:Plant}, and let the hypothesis in Theorem \ref{sufcons} hold, then for any $z \in \mathbb{Z}_+$, Algorithm \ref{algo1} (see Section \ref{sec:algo})  terminates, and returns a finite uniform bisimulation $\sim$ that has more than $z$ equivalence classes.
\end{corr}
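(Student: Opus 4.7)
The plan is to interpret Algorithm \ref{algo1} as an iterative refinement of Algorithm \ref{algo}, where at depth $k$ the equivalence classes are indexed by the last $k$ inputs applied. Concretely, for each tuple $(j_1,\dots,j_k) \in \{1,\dots,q\}^k$, introduce the refined set
\[
\mathcal{S}_{j_1\cdots j_k}^k = \sum_{\ell=1}^{k} A^{\ell-1} B u_{j_\ell} + cl(A^k \mathcal{A}),
\]
which specializes at $k=1$ to the sets in \eqref{eq:sjone} and satisfies the recursion $\mathcal{S}_{j_1\cdots j_k}^k = Bu_{j_1} + A\,\mathcal{S}_{j_2\cdots j_k}^{k-1}$. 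The central step is to prove by induction on $k$ that these refined sets are pairwise disjoint for distinct tuples. The base case is the hypothesis of Theorem \ref{sufcons}. For the inductive step, if $j_1 = j'_1$ one subtracts $Bu_{j_1}$ from an alleged common element and applies $A^{-1}$ (using invertibility of $A$) to reduce to the inductive hypothesis; if $j_1 \neq j'_1$ one establishes the containment $\mathcal{S}_{j_1\cdots j_k}^k \subseteq \mathcal{S}_{j_1}^1$, so that base-case disjointness forces the intersection to be empty.

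The main obstacle I anticipate is precisely this last containment. Proving it reduces to showing $\mathcal{S}_{j_2\cdots j_k}^{k-1} \subseteq cl(\mathcal{A})$, which in turn requires absorbing the finite sum $\sum_{\ell=1}^{k-1} A^{\ell-1} Bu_{j_{\ell+1}}$ (itself a length-$(k-1)$ forced response) together with an element of $cl(A^{k-1}\mathcal{A})$ into a single forced response; here the hypothesis $0 \in \mathcal{U}$ is indispensable, since it permits padding shorter input sequences with zero inputs so that the two summands can be combined into one consistent forced response of larger length. Without this assumption, $\mathcal{A}$ fails to be closed under the relevant sums and the argument breaks down.

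With disjointness in hand, I would verify that Algorithm \ref{algo1} in fact constructs this refinement, that each depth yields a well-defined finite partition of an invariant subset (under input $u_i$, states in $\mathcal{S}_{j_1\cdots j_k}^k$ map into $\mathcal{S}_{i\,j_1\cdots j_{k-1}}^k$, which delivers condition \eqref{def:Dt}), and that termination at any fixed depth follows from the same stability-based argument underlying Theorem \ref{prop:constr}, since $\rho(A)<1$ keeps $cl(A^k\mathcal{A})$ uniformly bounded. Finally, given any $z \in \mathbb{Z}_+$, the assumption $|\mathcal{U}|=q\geq 2$ lets us choose $k$ with $q^k > z$, so the partition at depth $k$ has strictly more than $z$ equivalence classes, completing the proof.
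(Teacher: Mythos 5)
Your core argument --- refine by input prefixes of length $k$ and prove pairwise disjointness by induction, splitting on whether the first inputs agree (invertibility of $A$ in one case; containment in the depth-one sets $\mathcal{S}^1_j$ plus their assumed disjointness in the other) --- is exactly the combinatorial engine of the paper's proof, and your transition property and the count $q^k>z$ match as well. The one substantive discrepancy is that you run the induction on the ``exact'' sets $\mathcal{S}^k_{j_1\cdots j_k}=\sum_{\ell}A^{\ell-1}Bu_{j_\ell}+cl(A^k\mathcal{A})$, which partition $cl(\mathcal{A})$ and hence prove Theorem \ref{sufcons} directly, whereas Corollary \ref{prop:many} is a claim about what Algorithm \ref{algo1} actually returns: the algorithm first runs an Algorithm~\ref{algo}-style separation loop to produce \emph{thickened} sets $\bar{\mathcal{X}}_i=\mathcal{C}^{(\tilde k)}_i+\mathcal{S}$ (finite point clouds plus a small open neighborhood calibrated by $d_{\tilde k}$), and only then refines those by prepending $\eta$ inputs. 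To close the statement as written you must (i) justify termination of that loop, which rests on the lower bound $d_k\ge\min_{v\ne w}d(\mathcal{S}^1_v,\mathcal{S}^1_w)>0$ from compactness and disjointness of the $\mathcal{S}^1_j$ together with $l_k\to0$ --- your appeal to ``$cl(A^k\mathcal{A})$ uniformly bounded'' is not the operative fact --- and (ii) rerun your disjointness induction with $\bar{\mathcal{X}}_j$ in place of $\mathcal{S}^1_j$, which works verbatim because the proof of Theorem \ref{prop:constr} supplies exactly the two properties you use: the $\bar{\mathcal{X}}_j$ are pairwise disjoint, and $Bu_{(i)}+A\bar{\mathcal{X}}_j\subset\bar{\mathcal{X}}_i$.

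One minor correction: $0\in\mathcal{U}$ is not what lets you absorb $\sum_{\ell=1}^{k-1}A^{\ell-1}Bu_{j_{\ell+1}}+cl(A^{k-1}\mathcal{A})$ into $cl(\mathcal{A})$; that inclusion holds unconditionally, since the prefix supplies the powers $A^0,\dots,A^{k-2}$ and the shifted forced response supplies the remaining powers, so the sum is again a single forced response. Where $0\in\mathcal{U}$ is genuinely indispensable is the reverse direction, the covering property $\bigcup_{j}\mathcal{S}^k_{j}=cl(\mathcal{A})$ of equation \eqref{eq:sjktgt}: forced responses of length less than $k$ must be padded with zero inputs to be written in the form $Bu_1+\cdots+A^{k-1}Bu_k+A^k\alpha$, and this covering is what makes your classes a partition of the invariant set $cl(\mathcal{A})$.
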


\begin{rmk}
These equivalence classes computed by Algorithm \ref{algo1} can also be made arbitrarily fine, that is to say, the diameter of each equivalence class can be made arbitrarily small (see Section \ref{sec:pfmany}).
\end{rmk}

\subsection{Necessary Conditions for the Existence of Finite Uniform Bisimulations}
\label{sec:neccon}

Next, we investigate necessary conditions for the existence of finite uniform bisimulations. 
We quickly realize that system \eqref{eq:Plant} may admit ``pathological" finite uniform bisimulations: If $A,B, \mathcal{U}$ have entries in $\mathbb{Q}$, then the partition $\mathbb{Q}^n$ and $\mathbb{R}^n \setminus \mathbb{Q}^n$ affords a finite uniform bisimulation of system \eqref{eq:Plant}. This motivates us to study regular finite uniform bisimulations. We propose a necessary condition for the existence of regular finite uniform bisimulations.

\begin{prop}
\label{neconnd}
Given system \eqref{eq:Plant} with $0 \in \mathcal{U}$. If $\sim$ is a regular finite uniform bisimulation on an invariant set $\mathcal{S}$ of system \eqref{eq:Plant}, $0 \in int([0])$, and $[0]$ is bounded, then $\rho(A) \le 1$.
\end{prop}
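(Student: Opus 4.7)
My plan is a direct proof by contradiction. Suppose $\rho(A) > 1$. I will exploit the presence of $0$ in the input alphabet: taking $u \equiv 0$ renders the dynamics autonomous ($x_{t+1} = Ax_t$), and the bisimulation property forces the equivalence class $[0]$ to be forward-invariant under $A$. If $\rho(A) > 1$ then some trajectory starting inside $[0]$ escapes to infinity, contradicting the assumed boundedness of $[0]$.

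The first step is to observe that for every $x \in [0]$, condition \eqref{def:Dt} applied with $u = 0$ to the pair $x \sim 0$ yields $Ax = f(x,0) \sim f(0,0) = 0$, so $Ax \in [0]$. Iterating, $A^t [0] \subseteq [0]$ for all $t \in \mathbb{N}$. Since by hypothesis $0 \in \mathrm{int}([0])$, pick $r > 0$ with $B_r(0) \subseteq [0]$; then $A^t B_r(0) \subseteq [0]$ for every $t \in \mathbb{N}$.

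The second step is to exhibit a nonzero real vector $w$ whose iterates $A^t w$ are unbounded. If some real eigenvalue of $A$ has modulus exceeding one, a corresponding eigenvector works directly. In the complex case, the real two-dimensional subspace spanned by the real and imaginary parts of an eigenvector associated with a $\lambda$ satisfying $|\lambda|>1$ is $A$-invariant, and the restriction has spectral radius $|\lambda|$, so Gelfand's formula furnishes a real $w$ in that subspace with $\|A^t w\| \to \infty$. Scaling $w$ by a sufficiently small $\epsilon > 0$ places $\epsilon w$ inside $B_r(0) \subseteq [0]$, whence $A^t(\epsilon w) \in [0]$ while $\|A^t(\epsilon w)\| = \epsilon \|A^t w\| \to \infty$, contradicting the boundedness of $[0]$.

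The argument is almost entirely structural; the only mildly delicate point is manufacturing a real vector with blowup in the presence of complex conjugate eigenvalues, which is routine linear algebra. Note that regularity of $\sim$ is not invoked directly in the proof — it enters through the companion hypothesis $0 \in \mathrm{int}([0])$, which supplies the ball of initial conditions needed to detect unbounded trajectories inside the supposedly bounded equivalence class.
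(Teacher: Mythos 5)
Your proof is correct. The two key facts you use --- that $[0]$ is forward-invariant under the zero-input map $x \mapsto Ax$ (from condition \eqref{def:Dt} with $u=0$), and that $0 \in int([0])$ supplies a ball of initial conditions --- are exactly the facts the paper uses, but the contradiction is reached by a different mechanism. The paper fixes an eigenvector $v$ with $|\lambda|>1$, forms the set $\mathcal{O}$ of radii $\alpha$ for which all real parts $Re(\gamma v)$, $|\gamma|\le\alpha$, lie in $[0]$, shows $\mathcal{O}$ is nonempty and bounded, takes $\beta=\sup\mathcal{O}$, and then pulls a point $Re(\kappa v)\notin[0]$ with $|\kappa|$ just above $\beta$ back by $1/\lambda$ to a point inside $[0]$; one application of $A$ with $u=0$ then maps an element of $[0]$ onto the excluded point, a contradiction. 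You instead iterate: $A^t B_r(0)\subseteq[0]$ for all $t$, and an unbounded real orbit (eigenvector in the real case, a vector in the real $A$-invariant plane spanned by $Re(v)$ and $Im(v)$ in the complex case) contradicts boundedness of $[0]$ directly. Your route is shorter and avoids the supremum bookkeeping, at the cost of the (routine, but worth spelling out carefully) linear-algebra fact that $\rho(A)>1$ forces some single real vector to have an unbounded orbit --- note that Gelfand's formula alone gives unboundedness of $\|A^t\|$ restricted to the invariant plane, and you should add the one-line step that boundedness of every orbit on a finite basis would bound the operator norms, to extract a specific $w$. The paper's one-step supremum argument has the side benefit that it localizes the contradiction to a single application of $A$, which is what the authors exploit in the scalar Corollary \ref{necon} to drop the boundedness hypothesis by treating $\beta=\infty$ separately; your unbounded-orbit argument relies on boundedness of $[0]$ in an essential way and would not adapt to that corollary without modification. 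For Theorem \ref{neconnd} as stated, both proofs are equally valid.
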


\begin{rmk}
Theorem \ref{neconnd} states that under certain assumptions, there do not exist regular finite uniform bisimulations for Schur unstable systems \eqref{eq:Plant}. This justifies why we study Schur stable systems in Theorem \ref{sufcon}.
\end{rmk}

We point out that the condition ``$[0]$ is bounded" in Theorem \ref{neconnd} cannot be dropped (see Example \ref{eg:nec_nd} in Section \ref{sec:eg}). 
However, the condition ``$[0]$ is bounded" in Theorem \ref{neconnd} can be dropped for scalar systems, where we restrict our attention to instances of \eqref{eq:Plant} described by
\begin{equation}
\label{eq:Plant1d}
x_{t+1} = a x_t + b u_t
\end{equation}
where $x_t \in \mathbb{R}$, $u_t \in \mathcal{U}$, and $a, b \in \mathbb{R}$. $\mathcal{U}$ is a finite subset of $\mathbb{R}$.

\begin{corr}
\label{necon}
Given system \eqref{eq:Plant1d} with $0 \in \mathcal{U}$. If $\sim$ is a regular finite uniform bisimulation on an invariant set $\mathcal{S}$ of system \eqref{eq:Plant1d} and ${0} \in int([{0}])$, then $|a| \le 1$.
\end{corr}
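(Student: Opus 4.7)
The plan is to argue by contradiction, assuming $|a| > 1$, and show that the equivalence class of $0$ swallows all of $\mathcal{S}$, violating the ``more than one class'' clause of the finite uniform bisimulation definition. Since regularity (or even just $0 \in int([0])$) gives us $(-r,r) \subset [0]$ for some $r > 0$, I have a genuine open ball around the origin inside $[0]$ to iterate on. The driving observation is that $0 \in \mathcal{U}$ provides a ``free'' input under which the dynamics reduce to $x_{t+1} = a x_t$, and the bisimulation property then forces the zero class to be invariant under multiplication by $a$.

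The first step is the invariance claim: for every $x \in [0]$, the state $x$ lies in $\mathcal{S}$ (so $ax = f(x,0) \in \mathcal{S}$ by invariance of $\mathcal{S}$), and since $x \sim 0$, condition (i) of Definition \ref{def:FB} applied with $u = 0$ gives $ax = f(x,0) \sim f(0,0) = 0$, i.e., $ax \in [0]$. Thus $a[0] \subseteq [0]$. By a straightforward induction, $a^k[0] \subseteq [0]$ for every $k \in \mathbb{N}$, so in particular $(-|a|^k r, |a|^k r) = a^k(-r,r) \subseteq [0]$ for all $k$.

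The second step is to cash this in: since $|a| > 1$, we have $|a|^k r \to \infty$, so taking the union over $k$ yields $\mathbb{R} \subseteq [0]$. Combined with $[0] \subseteq \mathcal{S} \subseteq \mathbb{R}$, this forces $\mathcal{S} = [0] = \mathbb{R}$, meaning the quotient $\{[x] : x \in \mathcal{S}\}$ has cardinality exactly one. This contradicts condition (ii) of Definition \ref{def:FB}, which requires $1 < |\{[x] : x \in \mathcal{S}\}|$. Hence $|a| \le 1$.

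There is no real obstacle here; the whole argument rests on the fact that in one dimension a single scaling factor $a$ with $|a|>1$ is enough to blow up any open interval to the whole line in finitely many steps for any target radius, and to the whole line in the limit. The scalar setting is what makes the boundedness hypothesis on $[0]$ (needed in Theorem \ref{neconnd}) unnecessary: in higher dimensions the matrix $A$ can act ``tangentially'' (e.g., via rotations or Jordan structure) and an unbounded $[0]$ need not exhaust $\mathcal{S}$ under iteration, but in $\mathbb{R}$ the dilation is uniform and escapes to infinity immediately. Regularity of $\sim$ is never actually invoked in the argument beyond giving meaning to $0 \in int([0])$; it is assumed in the statement for consistency with Theorem \ref{neconnd}.
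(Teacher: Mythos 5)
Your argument is correct, but it reaches the contradiction by a genuinely different route than the paper. Both proofs rest on the same forward-invariance observation: since $0 \in \mathcal{U}$, condition \eqref{def:Dt} with $u=0$ gives $a[0] \subseteq [0]$. The paper exploits this locally: it sets $\beta = \sup\{x \in \mathcal{S} : [0,x] \subset [0]\}$, picks a point $\beta+\delta \notin [0]$ arbitrarily close to the supremum, pulls it back by $a^{2}$ to a point $z < \beta$ that must lie in $[0]$, and pushes $z$ forward twice to contradict $\beta + \delta \notin [0]$; this forces a case split between $\beta < \infty$ and $\beta = \infty$, the latter handled by the analogous infimum on the negative half-line. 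You instead iterate globally: $a^{k}(-r,r) = (-|a|^{k}r,\,|a|^{k}r) \subseteq a^{k}[0] \subseteq [0]$ for all $k$, so $[0] = \mathcal{S} = \mathbb{R}$ and the quotient collapses to a single class, contradicting the cardinality clause \eqref{def:Fn} of Definition \ref{def:FB} rather than the maximality of a supremum. Your version is more economical---no supremum, no two-case analysis---and it isolates exactly where one-dimensionality enters (a scalar dilation with $|a|>1$ expands every ball around the origin to all of $\mathbb{R}$), which substantiates your closing remark on why the boundedness hypothesis of Theorem \ref{neconnd} can be dropped in the scalar case. The price is negligible: you need the (true, and part of Definition \ref{def:FB}) fact that a finite uniform bisimulation has strictly more than one equivalence class, whereas the paper's contradiction is self-contained in the construction of $\beta$.
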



\section{Constructive Algorithms}
\label{sec:algo}

First, we present an algorithm for computing finite uniform bisimulations when the conditions in Theorem \ref{sufcon} are satisfied. 

We begin by introducing the notation of binary partitions of the finite input set $\mathcal{U}$ with $|\mathcal{U}| > 1$: A pair $(\mathcal{U}_1, \mathcal{U}_2)$ is a  binary partition of $\mathcal{U}$ if $\mathcal{U}_1, \mathcal{U}_2$ are nonempty, disjoint subsets of $\mathcal{U}$, and $\mathcal{U}_1 \cup  \mathcal{U}_2 = \mathcal{U}$. The order of $\mathcal{U}_1, \mathcal{U}_2$ is not relevant: $(\mathcal{U}_1, \mathcal{U}_2)$ is the same as $(\mathcal{U}_2, \mathcal{U}_1)$. Since $\mathcal{U}$ is a finite set, there are finitely many distinct binary partitions of $\mathcal{U}$. We use $\{(\mathcal{U}_1^{(i)}, \mathcal{U}_2^{(i)}): i = 1, \dots, r\}$ to denote the collection of all binary partitions of $\mathcal{U}$. Here $r  = (C_q^1 + C_q^2 + \cdots + C_q^{q-1})/2$, where $q = |\mathcal{U}|$, and $C_q^j = \frac{q!}{j!(q-j)!}$ represents the quantity ``$q$ choose $j$". Now we are ready to present the following algorithm to compute finite uniform bisimulations of system \eqref{eq:Plant}.

{
\begin{algorithm}[H]
\caption{Computing a Finite Uniform Bisimulation}
\begin{algorithmic}[1] 

\Statex {\bf Input}: Matrix $A$, $B$, set $\mathcal{U}$
\State {\bf Compute}:  $h = \max \{\|Bu\|_1: u \in \mathcal{U}\}$
\State {\bf Choose}: $\epsilon$ such that $0 < \epsilon < 1 - \rho(A)$.
\State {\bf Compute}: Matrix $T$, invertible,  such that $\|T^{-1}AT\|_1 \le \rho(A) + \epsilon$.
\State {\bf Compute}: All binary partitions of $\mathcal{U}$: $(\mathcal{U}_1^{(i)}, \mathcal{U}_2^{(i)}), i = 1, \dots, r$.
\State {\bf Compute}: $\kappa = \frac{2 \|T\|_1\|T^{-1}\|_1}{1-\rho(A)-\epsilon}$
\State $k \leftarrow 1$.
\Loop
\State {\bf Compute}: $l_k = h \|A^k\|_1$
\State $i \leftarrow 1$.
\While{$i \le r$}
\State {\bf Compute}:  $\mathcal{C}_1^{(i)} = \{Bu_1 + ABu_2 + \cdots + A^{k-1}Bu_k : u_1 \in \mathcal{U}_1^{(i)}, u_2, \dots , u_k \in \mathcal{U}\} $
\Statex \qquad \qquad \qquad \qquad \ $\mathcal{C}_2^{(i)} = \{Bu_1 + ABu_2 + \cdots + A^{k-1}Bu_k : u_1 \in \mathcal{U}_2^{(i)}, u_2, \dots , u_k \in \mathcal{U}\}$
\State {\bf Compute}: $d_k^{(i)} = \min \{\|\alpha - \beta\|_1 : \alpha \in \mathcal{C}_1^{(i)}, \beta \in \mathcal{C}_2^{(i)}\}$
\If {$d_k^{(i)} \ge \kappa l_k$}
\State $\tilde{i} \leftarrow i, \tilde{k} \leftarrow k$.
\State {\bf Exit} the loop
\EndIf
\State $i \leftarrow i+1$.
\EndWhile
\State $k \leftarrow k+1$.
\EndLoop
\State {\bf Compute}: $\mathcal{S}= \{x \in \mathbb{R}^n : \|T^{-1} x\|_1 < \frac{d_{\tilde{k}}^{(\tilde{i})}}{2 \|T\|_1}\}$
\State {\bf Compute}: $\mathcal{X}_1 =  \mathcal{C}_1^{(\tilde{i})} + \mathcal{S}, 
\mathcal{X}_2 =  \mathcal{C}_2^{(\tilde{i})} + \mathcal{S}$
\State {\bf Return}: $\mathcal{X}_1, \mathcal{X}_2$
\end{algorithmic}
\label{algo}
\end{algorithm}}

\begin{rmk}
In the preceding algorithm, one approach to compute matrix $T$ involves Schur's triangularization of matrix $A$ (pp. 79, \cite{Horn}). We refer interested readers to \cite{Horn} on the specifics of computing matrix $T$ such that $\|T^{-1}AT\|_1 \le \rho(A) + \epsilon$ is satisfied.  
\end{rmk}

\begin{rmk}
Here we explain why Algorithm \ref{algo} returns {two} equivalence classes. 
We first point out that if the conditions in Theorem \ref{sufcon} are satisfied, the number of equivalence classes generated by a finite uniform bisimulation could be greater than two, which is the case in Example \ref{eg:2d} in Section \ref{sec:eg}. However, for certain systems (see Example \ref{eg:addn} in Section \ref{sec:eg}), two, and only two equivalence classes can be generated based on the analytical result stated in Theorem \ref{sufcon}. Therefore Algorithm \ref{algo} returns two equivalence classes, since it is capable of computing finite uniform bisimulations for \emph{any} system that satisfies the conditions in Theorem \ref{sufcon}.  As we shall see next, we propose another algorithm in case more equivalence classes are desired.
\end{rmk}

Next, we present a second algorithm, which is an extended version of Algorithm \ref{algo}, to generate an arbitrarily large number of equivalence classes when the conditions in Theorem \ref{sufcons} are satisfied.

{
\begin{algorithm}[H]
\caption{Computing a Finite Uniform Bisimulation with Many Equivalence Classes}
\begin{algorithmic} [1]

\Statex {\bf Input}: Matrix $A$, $B$, set $\mathcal{U} = \{u_{(1)}, u_{(2)}, \dots, u_{(q)}\}$, integer $z$: Lower bound of the number of equivalence classes.
\State {\bf Compute}:  $h = \max \{\|Bu\|_1: u \in \mathcal{U}\}$
\State {\bf Choose}: $\epsilon$ such that $0 < \epsilon < 1 - \rho(A)$.
\State {\bf Compute}: Matrix $T$, invertible,  such that $\|T^{-1}AT\|_1 \le \rho(A) + \epsilon$.
\State {\bf Compute}: $\kappa = \frac{2 \|T\|_1\|T^{-1}\|_1}{1-\rho(A)-\epsilon}$
\State $k \leftarrow 1$.
\Loop
\State {\bf Compute}: $l_k = h \|A^k\|_1$
\State {\bf Compute}:    $\ \ \mathcal{C}_1^{(k)} = \{Bu_1 + ABu_2 + \cdots + A^{k-1}Bu_k :  u_1 = u_{(1)}, u_2, \dots , u_k \in \mathcal{U}\}$
\Statex \ \quad \quad \qquad \qquad \ $\mathcal{C}_2^{(k)} = \{Bu_1 + ABu_2 + \cdots + A^{k-1}Bu_k :  u_1 = u_{(2)}, u_2, \dots , u_k \in \mathcal{U}\}$ 
\Statex \qquad \qquad \qquad \qquad \ $\vdots$ 
\Statex \ \quad \quad \qquad \qquad \ $\mathcal{C}_q^{(k)} = \{Bu_1 + ABu_2 + \cdots + A^{k-1}Bu_k :  u_1 = u_{(q)}, u_2, \dots , u_k \in \mathcal{U}\}$
\State {\bf Compute}: $d_k = \min \{\|\alpha - \beta\|_1 : \alpha \in \mathcal{C}_v^{(i)}, \beta \in \mathcal{C}_w^{(i)}, w \neq v, 1 \le w, v \le q \}$
\If {$d_k \ge \kappa l_k$}
\State $\tilde{k} \leftarrow k$.
\State {\bf Exit} the loop
\EndIf
\State $k \leftarrow k+1$.
\EndLoop
\State {\bf Compute}: $\mathcal{S} = \{x \in \mathbb{R}^n : \|T^{-1} x\|_1 < \frac{d_{\tilde{k}}}{2 \|T\|_1}\}$
\State {\bf Compute}: $\bar{\mathcal{X}_1} =  \mathcal{C}_1^{(\tilde{k})} + \mathcal{S}, \bar{\mathcal{X}_2} =  \mathcal{C}_2^{(\tilde{k})} + \mathcal{S}, \dots, 
\bar{\mathcal{X}_q} =  \mathcal{C}_q^{(\tilde{k})} + \mathcal{S} $
\State {\bf Choose}: $\eta \in \mathbb{Z}_+$ such that $q^{\eta + 1} > z$.
\State {\bf Compute}: An enumeration $\{{\bf u}_1, {\bf u}_2, \dots, {\bf u}_{q^\eta}\}$ of the set $\mathcal{U}^\eta$, where ${\bf u}_j = (u_j^1, \dots, u_j^\eta)$.
\State {\bf Compute}: $\mathcal{X}_k = Bu_1 + ABu_2 + \cdots + A^{\eta-1}Bu_\eta + A^{\eta} \bar{\mathcal{X}_i},  \ 1 \le k \le q^{\eta + 1}$,  where $(u_1, \dots, u_\eta) = {\bf u}_j$ for some $1 \le j \le q^{\eta }$, and $1 \le i \le q$.
\State {\bf Return}: $\mathcal{X}_1, \dots, \mathcal{X}_{q^{\eta + 1}}$
\end{algorithmic}
\label{algo1}
\end{algorithm}}

As we shall see in the derivation of Theorem \ref{sufcons} in Section \ref{sec:derivation}, we claim that the sets $\mathcal{X}_1, \dots, \mathcal{X}_{q^{\eta + 1}}$ returned by Algorithm \ref{algo1} afford a finite uniform bisimulation on $\cup_{k = 1}^{q^{\eta + 1}} \mathcal{X}_k$ of system \eqref{eq:Plant}.


\section{Derivation of Main Results}
\label{sec:derivation}
\subsection{Derivation of Theorem \ref{sufcon}}
We first introduce several Lemmas which will be instrumental in this derivation of Theorem \ref{sufcon}.

\begin{lem}
\label{lem:cpt}
Given system \eqref{eq:Plant}, if matrix $A$ has all eigenvalues within the unit disc, then $cl(\mathcal{A})$ is compact.
\end{lem}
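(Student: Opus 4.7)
The plan is to show that $\mathcal{A}$ is bounded in $\mathbb{R}^n$; since the closure of a bounded subset of $\mathbb{R}^n$ is closed and bounded, the Heine--Borel theorem will then immediately yield compactness of $cl(\mathcal{A})$.

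To establish boundedness of $\mathcal{A}$, the key observation is that $\mathcal{U}$ is finite, so
\[
h \;=\; \max_{u \in \mathcal{U}} \|Bu\|_1 \;<\; \infty.
\]
Any $\alpha \in \mathcal{A}$ has the form $\alpha = \sum_{\tau=0}^{t} A^{t-\tau} B u_\tau$ for some $t \in \mathbb{N}$ and $u_0, \ldots, u_t \in \mathcal{U}$. Applying the triangle inequality and submultiplicativity of the $1$-induced norm yields
\[
\|\alpha\|_1 \;\le\; \sum_{\tau=0}^{t} \|A^{t-\tau}\|_1 \, \|Bu_\tau\|_1 \;\le\; h \sum_{k=0}^{t} \|A^k\|_1,
\]
so the task reduces to bounding $\sum_{k=0}^{\infty} \|A^k\|_1$ uniformly in $t$.

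This is where the eigenvalue hypothesis $\rho(A) < 1$ enters. I would pick $\epsilon \in (0, 1 - \rho(A))$ and invoke the standard Schur triangularization argument (cf. Horn and Johnson) to obtain an invertible $T$ with $\|T^{-1} A T\|_1 \le \rho(A) + \epsilon$. This is exactly the matrix $T$ computed in Step 3 of Algorithm \ref{algo}, so the estimate is internal to the paper. Then
\[
\|A^k\|_1 \;=\; \|T (T^{-1} A T)^k T^{-1}\|_1 \;\le\; \|T\|_1 \|T^{-1}\|_1 (\rho(A) + \epsilon)^k,
\]
and since $\rho(A) + \epsilon < 1$, summing the geometric series gives
\[
\sum_{k=0}^{\infty} \|A^k\|_1 \;\le\; \frac{\|T\|_1 \|T^{-1}\|_1}{1 - \rho(A) - \epsilon} \;<\; \infty.
\]
Combining with the earlier bound yields a uniform bound on $\|\alpha\|_1$ independent of $t$ and of the input sequence, so $\mathcal{A}$ is bounded and the conclusion follows.

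There is no real obstacle here: the lemma is essentially the standard fact that the reachable set of a Schur-stable linear system with bounded input is bounded. The only minor choice is whether to derive the exponential decay of $\|A^k\|_1$ via Gelfand's formula or via a similarity transformation into a near-spectral-radius norm; I would use the latter since it reuses infrastructure already in Algorithm \ref{algo} and keeps the constants explicit for later use in the derivation of Theorem \ref{sufcon}.
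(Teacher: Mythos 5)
Your proposal is correct and follows essentially the same route as the paper's proof: bound $\|\alpha\|_1$ by $h\sum_{k=0}^{\infty}\|A^k\|_1$ using the triangle inequality and finiteness of $\mathcal{U}$, invoke Schur stability to make the series converge, and conclude compactness of $cl(\mathcal{A})$ via Heine--Borel. The only difference is that you derive the convergence of $\sum_k \|A^k\|_1$ explicitly via the similarity transformation $\|T^{-1}AT\|_1 \le \rho(A)+\epsilon$, whereas the paper simply cites this standard fact from Horn and Johnson.
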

\begin{proof}
If $A \in \mathbb{R}^{n \times n}$ has all eigenvalues within the unit disc, then $\sum_{\tau = 0}^{\infty} \|A^{\tau}\|_1$ converges (pp. 298, \cite{Horn}). Since $\mathcal{U}$ is finite, $\max\{\|Bu\|_1 : u \in \mathcal{U}\}$ is also finite. Combining these two facts, and applying triangle inequality, we conclude that $\mathcal{A}$ is bounded and therefore $cl(\mathcal{A})$ is bounded. Since $cl(\mathcal{A})$ is closed and bounded in $\mathbb{R}^n$, $cl(\mathcal{A})$ is compact. 
\end{proof}

Next, we study the structure of set $\mathcal{A}$ as defined in \eqref{eq:set-A}. 
By the definition of $\mathcal{A}$ and $0 \in \mathcal{U}$, and recall \eqref{eq:sjone}, we have 
\begin{equation}
\label{eq:atosj1}
\bigcup_{j=1}^q \mathcal{S}_j^1 = cl(\mathcal{A}). 
\end{equation}
Generally, for any $k \in \mathbb{Z}_+$, let $\{{\bf u}_1, {\bf u}_2, \dots, {\bf u}_{q^k}\}$ be an enumeration of the set $\mathcal{U}^k$, where ${\bf u}_j = (u_j^1, \dots, u_j^k)$, $u_j^1, \dots, u_j^k \in \mathcal{U}$, we define sets $\{\mathcal{S}_j^k\}_{j=1}^{q^k}$ as follows
\begin{equation}
\label{eq:sjkdefn}
\mathcal{S}_j^k = Bu_j^1 + ABu_j^2 + \cdots + A^{k-1} B u_j^k + cl(A^k \mathcal{A}), \quad j = 1,2,\dots,q^k.
\end{equation}
We also have
\begin{equation}
\label{eq:sjktgt}
\bigcup_{j=1}^{q^k} \mathcal{S}_j^k = cl(\mathcal{A}).
\end{equation}
Now we introduce the following Lemma.
\begin{lem}
\label{lem:sjwhole}
Given system \eqref{eq:Plant}, assume that $A$ has all eigenvalues within the unit disc. If open sets $\mathcal{W}$ and $\mathcal{V}$ is a disconnection of $cl(\mathcal{A})$, then there exists $k^* \in \mathbb{Z}_+$ such that for all $j \in \{1,\dots, q^{k^*}\}$,
\begin{equation}
\label{eq:sjwhole}
\mathcal{S}_j^{k^*} \cap \mathcal{W} \neq \varnothing \quad \Rightarrow \quad \mathcal{S}_j^{k^*} \subset \mathcal{W}
\end{equation}
\end{lem}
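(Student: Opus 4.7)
The plan is to use the Schur-stability of $A$ to make each $\mathcal{S}_j^k$ arbitrarily small in diameter for large $k$, so small that no such set can straddle the disconnection $(\mathcal{W},\mathcal{V})$. The argument splits naturally into two estimates: a lower bound on the distance between the two pieces of $cl(\mathcal{A})$ cut by the disconnection, and an upper bound on $\operatorname{diam}(\mathcal{S}_j^k)$ that tends to zero in $k$.

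For the first estimate, Lemma \ref{lem:cpt} gives that $cl(\mathcal{A})$ is compact. Since $(\mathcal{W},\mathcal{V})$ is a disconnection, the sets $\mathcal{W}\cap cl(\mathcal{A})$ and $\mathcal{V}\cap cl(\mathcal{A})$ are nonempty, disjoint, and relatively open in $cl(\mathcal{A})$; as complements of each other inside $cl(\mathcal{A})$, they are also relatively closed, hence compact in $\mathbb{R}^n$. Two disjoint nonempty compact subsets of $\mathbb{R}^n$ have strictly positive separation, so
\[
\delta \;=\; \min\bigl\{\|x-y\|_1 : x \in \mathcal{W}\cap cl(\mathcal{A}),\; y \in \mathcal{V}\cap cl(\mathcal{A})\bigr\} \;>\; 0.
\]
For the second estimate, $\rho(A)<1$ implies $\|A^k\|_1 \to 0$ (for instance via Gelfand's formula, or via the $\epsilon$-perturbed norm construction used in Algorithm \ref{algo}). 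Combined with boundedness of $\mathcal{A}$ from Lemma \ref{lem:cpt},
\[
\operatorname{diam}\bigl(cl(A^k \mathcal{A})\bigr) \;=\; \operatorname{diam}(A^k \mathcal{A}) \;\le\; \|A^k\|_1 \, \operatorname{diam}(\mathcal{A}) \;\longrightarrow\; 0.
\]
Because $\mathcal{S}_j^k$ is a translate of $cl(A^k\mathcal{A})$ and translation preserves diameter, $\operatorname{diam}(\mathcal{S}_j^k)$ enjoys the same bound, independent of $j$.

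To finish, I would pick $k^* \in \mathbb{Z}_+$ with $\operatorname{diam}(cl(A^{k^*}\mathcal{A})) < \delta$, and for an arbitrary $j$ with $\mathcal{S}_j^{k^*}\cap \mathcal{W} \neq \varnothing$ argue by contradiction: if $\mathcal{S}_j^{k^*}$ also met $\mathcal{V}$, then picking $x \in \mathcal{S}_j^{k^*}\cap \mathcal{W}$ and $y \in \mathcal{S}_j^{k^*}\cap \mathcal{V}$ (both admissible since \eqref{eq:sjktgt} guarantees $\mathcal{S}_j^{k^*}\subset cl(\mathcal{A}) \subset \mathcal{W}\cup\mathcal{V}$) would give $\|x-y\|_1 \le \operatorname{diam}(\mathcal{S}_j^{k^*}) < \delta$, contradicting the definition of $\delta$. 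Hence $\mathcal{S}_j^{k^*}\subset \mathcal{W}$, as required. I expect the only delicate point to be justifying $\delta>0$: the disconnection gives open sets in $\mathbb{R}^n$, which are not themselves compact, so it is essential to intersect with $cl(\mathcal{A})$ and use that the two resulting pieces are relatively closed in a compact set. Everything else is a direct consequence of $\rho(A)<1$ together with the translational structure of $\mathcal{S}_j^k$.
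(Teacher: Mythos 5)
Your proof is correct, but it takes a different route from the paper's. The paper argues by contradiction with a sequential compactness argument: assuming that for every $k$ some $\mathcal{S}_{j(k)}^k$ meets both $\mathcal{W}$ and $\mathcal{V}$, it picks witnesses $w_k,v_k$, extracts convergent subsequences in the compact set $cl(\mathcal{A})$, uses $diam(\mathcal{S}_j^k)\to 0$ to force the two limits to coincide at some $w$, and then derives a contradiction from the openness of $\mathcal{W}$ and the disjointness of $\mathcal{W}$ and $\mathcal{V}$. You instead argue directly: you extract a uniform separation constant $\delta>0$ between $\mathcal{W}\cap cl(\mathcal{A})$ and $\mathcal{V}\cap cl(\mathcal{A})$ (correctly justified --- these are complementary, hence relatively clopen, pieces of the compact set $cl(\mathcal{A})$, so each is compact, and disjoint nonempty compact sets have positive distance, a fact the paper itself invokes later in the proof of Theorem \ref{prop:constr}), and then choose $k^*$ so that $diam(\mathcal{S}_j^{k^*})\le \|A^{k^*}\|_1\,diam(\mathcal{A})<\delta$. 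Both proofs rest on the same two ingredients (compactness of $cl(\mathcal{A})$ from Lemma \ref{lem:cpt} and the vanishing diameter bound on $\mathcal{S}_j^k$), but your version is more direct and quantitative: it produces an explicit threshold ($\|A^{k^*}\|_1\,diam(\mathcal{A})<\delta$) for choosing $k^*$ rather than a pure existence statement, and it avoids the subsequence extraction and the ``without loss of generality'' step. The one delicate point --- that one must pass to the intersections with $cl(\mathcal{A})$ to get compactness, since $\mathcal{W}$ and $\mathcal{V}$ themselves are open and unbounded-looking --- is exactly the point you flag and handle correctly.
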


\begin{proof} 
We show this Lemma by contradiction. We first assume that for all $k \in \mathbb{Z}_+$, there is $j(k) \in \{1,\dots, q^{k}\}$ such that ${\mathcal{S}_{j(k)}^k} \cap \mathcal{W} \neq \varnothing$ and ${\mathcal{S}_{j(k)}^k}  \cap \mathcal{V} \neq \varnothing$. For each $k$, choose $w_k \in {\mathcal{S}_{j(k)}^k}  \cap \mathcal{W}$ and $v_k \in {\mathcal{S}_{j(k)}^k}  \cap \mathcal{V}$. Then we have constructed two sequences $\{w_k\}_{k=1}^\infty$ and $\{v_k\}_{k=1}^\infty$.

Since $\{w_k\}_{k=1}^\infty \subset cl(\mathcal{A})$, $\{v_k\}_{k=1}^\infty \subset cl(\mathcal{A})$ and $cl(\mathcal{A})$ is compact (by Lemma \ref{lem:cpt}), there exists a subsequence $\{w_{k_l}\}_{l=1}^\infty$ that converges to a point in $cl(\mathcal{A})$. Similarly, there also exists a subsequence of $\{v_{k_l}\}_{l=1}^\infty$ that converges to a point in $cl(\mathcal{A})$. By relabeling, we have found two sequences $\{w_{k_p}\}_{p=1}^\infty$ and $\{v_{k_p}\}_{p=1}^\infty$ such that
\begin{equation}
\label{eq:splm}
\lim_{p \to \infty} w_{k_p} = w, \quad \textrm{and} \quad \lim_{p \to \infty} v_{k_p} = v
\end{equation}
where $w,v \in cl(\mathcal{A})$.

By the construction of $\mathcal{S}_j^k$ \eqref{eq:sjkdefn}, we see that for any $j$, $diam(\mathcal{S}_j^k) \le \|A^k\|_1 diam(\mathcal{A})$. Since $A$ has all eigenvalues within the unit disc, $\underset{k \to \infty}{\lim} A^k = {0}_{n \times n}$ (pp.298, \cite{Horn}). By boundedness of set $\mathcal{A}$, $diam(\mathcal{A})$ is finite. Therefore $diam(\mathcal{S}_j^k)$ goes to $0$ as $k$ tends to infinity.
Note that $w_{k_p} \in \mathcal{S}_{j(k_p)}^{k_p}$ and $v_{k_p} \in \mathcal{S}_{j(k_p)}^{k_p}$, and $k_p \ge p$, therefore 
$\lim_{p \to \infty} \|w_{k_p} - v_{k_p}\|_1 = 0$. 
Combine with \eqref{eq:splm}, we have 
$\lim_{p \to \infty} w_{k_p} = \lim_{p \to \infty} v_{k_p} = w$, where $w \in cl(\mathcal{A})$. Without loss of generality, let $w \in \mathcal{W}$. Since $\mathcal{W}$ is open, there exist $\epsilon > 0$ such that the open ball $B_\epsilon (w) \subset \mathcal{W}$. Since $\mathcal{W} \cap \mathcal{V} = \varnothing$, $\{v_{k_p}\}_{p=1}^\infty \cap B_\epsilon (w) = \varnothing$. Therefore $\|v_{k_p} - w\|_1 \ge \epsilon$ for all $p$. This is a contradiction with  $\lim_{p \to \infty} v_{k_p} = w$. Therefore \eqref{eq:sjwhole} holds. 
\end{proof}
Next, we introduce another Lemma which is based on Lemma \ref{lem:sjwhole}.

\begin{lem}
\label{lem:sjwhole1}
Given system \eqref{eq:Plant}, assume that $A$ has all eigenvalues within the unit disc. If open sets $\mathcal{W}$ and $\mathcal{V}$ is a disconnection of $cl(\mathcal{A})$, then there exist open sets $\mathcal{W'}$ and $\mathcal{V'}$ in $\mathbb{R}^n$ such that the pair $\mathcal{W'}$ and $\mathcal{V'}$ is also a disconnection of $cl(\mathcal{A})$, and for all $j \in \{1,\dots, q\}$
\begin{equation}
\label{eq:sjwhole1}
\mathcal{S}_j^{1} \cap \mathcal{W'} \neq \varnothing \quad \Rightarrow \quad \mathcal{S}_j^{1} \subset \mathcal{W'}
\end{equation}
\end{lem}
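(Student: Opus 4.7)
My plan is to reduce the task to producing a non-trivial bipartition $I_1 \sqcup I_2 = \{1,\ldots,q\}$ such that $X_1 := \bigcup_{i \in I_1} \mathcal{S}_i^1$ and $X_2 := \bigcup_{i \in I_2} \mathcal{S}_i^1$ are disjoint subsets of $\mathbb{R}^n$. Once such a bipartition is in hand, $X_1$ and $X_2$ are disjoint compact sets (compactness follows from Lemma \ref{lem:cpt}), and by normality of $\mathbb{R}^n$ they admit disjoint open neighborhoods $\mathcal{W}' \supset X_1$ and $\mathcal{V}' \supset X_2$. Since $X_1 \cup X_2 = cl(\mathcal{A})$, the pair $(\mathcal{W}', \mathcal{V}')$ is then a disconnection of $cl(\mathcal{A})$ in which each $\mathcal{S}_i^1$ lies wholly on one side, establishing \eqref{eq:sjwhole1}.

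Producing the bipartition amounts to showing that the intersection graph $G$ on $\{1,\ldots,q\}$---defined by $i \sim i'$ iff $\mathcal{S}_i^1 \cap \mathcal{S}_{i'}^1 \neq \varnothing$---is disconnected, since any non-trivial grouping of its connected components then furnishes the required $I_1, I_2$. I would argue this by contraposition: assuming $G$ is connected, I will show that $cl(\mathcal{A})$ itself is connected, contradicting the existence of the disconnection $(\mathcal{W}, \mathcal{V})$. The key observation is the self-similar identity $cl(\mathcal{A}) = \bigcup_{i=1}^q f_i(cl(\mathcal{A}))$ where $f_i(x) := Ax + Bu_i$; this follows directly from $\bigcup_i \mathcal{S}_i^1 = cl(\mathcal{A})$ together with $\mathcal{S}_i^1 = f_i(cl(\mathcal{A}))$ (using continuity of $A$ and compactness of $cl(\mathcal{A})$ to identify $cl(A\mathcal{A})$ with $A\cdot cl(\mathcal{A})$). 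Moreover, since $\rho(A) < 1$, each $f_i$ is a contraction in some norm equivalent to the Euclidean norm.

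The main obstacle is the connectedness-propagation argument that closes the contrapositive. My plan is to pick a compact connected set $K_0$ containing $cl(\mathcal{A})$ (e.g., a sufficiently large closed ball) and iterate $K_{n+1} := F(K_n)$ with $F(K) := \bigcup_{i=1}^q f_i(K)$. By induction $K_n \supseteq cl(\mathcal{A})$ for every $n$, so $f_i(K_n) \cap f_j(K_n) \supseteq \mathcal{S}_i^1 \cap \mathcal{S}_j^1$; hence the intersection graph on $\{f_i(K_n)\}_{i=1}^q$ contains the assumed-connected $G$ and is itself connected. Since each $f_i(K_n)$ is connected as a continuous image of the connected $K_n$, the union $K_{n+1}$ is connected. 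One then shows $K_n \to cl(\mathcal{A})$ in Hausdorff distance using the contraction property of $F$, and since the Hausdorff limit of connected compacta is connected, $cl(\mathcal{A})$ is connected---the desired contradiction.
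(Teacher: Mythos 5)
Your argument is correct, but it takes a genuinely different route from the paper's. The paper first proves (Lemma \ref{lem:sjwhole}) that at some depth $k^*$ every piece $\mathcal{S}_j^{k^*}$ lies wholly in $\mathcal{W}$ or wholly in $\mathcal{V}$, via a sequential-compactness argument exploiting $diam(\mathcal{S}_j^k)\le \|A^k\|_1\, diam(\mathcal{A})\to 0$; it then descends from level $k^*$ to level $1$ one step at a time, replacing the disconnection at each stage by preimages $f^{-1}(-Bu+\mathcal{W})$, $f^{-1}(-Bu+\mathcal{V})$ under $f(x)=Ax$ of translated copies of the current disconnection. You instead work entirely at level $1$: you observe that $cl(\mathcal{A})$ is the attractor of the contracting IFS $\{f_i(x)=Ax+Bu_i\}$ with first-level pieces $\mathcal{S}_i^1=f_i(cl(\mathcal{A}))$, and prove (essentially Hata's connectedness criterion) that if the intersection graph of these pieces were connected then $cl(\mathcal{A})$ would be connected, by propagating connectedness through the Hutchinson iterates $K_{n+1}=\bigcup_i f_i(K_n)$ and passing to the Hausdorff limit; the contrapositive yields a nontrivial bipartition into two disjoint compact unions of pieces, which you separate by disjoint open sets. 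Each step checks out: monotonicity gives $K_n\supseteq cl(\mathcal{A})$, hence the intersection graph of $\{f_i(K_n)\}$ inherits the edges of $G$; finite unions of connected sets with connected intersection graph are connected; and Hausdorff limits of connected compacta are connected. Your route buys a cleaner and in fact stronger statement ($cl(\mathcal{A})$ is disconnected iff the intersection graph of $\{\mathcal{S}_j^1\}$ is) and bypasses Lemma \ref{lem:sjwhole} and the preimage bookkeeping altogether, at the cost of invoking standard but external machinery (Hutchinson's fixed-point theorem in the Hausdorff metric, using a norm adapted to $\rho(A)<1$, and the connectedness of Hausdorff limits); the paper's argument is more elementary and self-contained. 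Do note that, like the paper, you are implicitly using $0\in\mathcal{U}$ to get the identity $\bigcup_{i}\mathcal{S}_i^1=cl(\mathcal{A})$ of \eqref{eq:atosj1}, so that hypothesis should be carried along.
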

\begin{proof}
By Lemma \ref{lem:sjwhole} , \eqref{eq:sjwhole} holds, and we only need to consider the case when $k^* \ge 2$.

Define a function $f: \mathbb{R}^n \to \mathbb{R}^n$ as :$f(x) = Ax$. Clearly $f$ is continuous. For any set $\mathcal{S}$, use $f^{-1}(\mathcal{S})$ to denote the set
$ f^{-1}(\mathcal{S}) = \{x \in \mathbb{R}^n | f(x) \in \mathcal{S}\}$.

For $\{\mathcal{S}_j^{k^*}\}_{j=1}^{q^{k^*}}$ as constructed in \eqref{eq:sjkdefn}, let $u$ be an element of $\mathcal{U}$, then define an index set $\mathcal{J}$ as
\begin{equation*}
\mathcal{J} = \{j \in \{1,\dots,q^{k^*}\} : u_j^1 = u \},
\end{equation*}
then $|\mathcal{J}| = q^{k^*-1}$. Define sets
\begin{equation}
\label{eq:tildes}
\tilde{\mathcal{S}}_j^{k^*} = -Bu + {\mathcal{S}}_j^{k^*}, j \in \mathcal{J}.
\end{equation}
For any $j \in \mathcal{J}$, by \eqref{eq:sjwhole}, either $\tilde{\mathcal{S}}_j^{k^*} \subset -Bu + \mathcal{W}$ or $\tilde{\mathcal{S}}_j^{k^*} \subset -Bu + \mathcal{V}$. Write $\mathcal{W}' = f^{-1}(-Bu + \mathcal{W})$ and $\mathcal{V}' = f^{-1}(-Bu + \mathcal{V})$, then either $f^{-1}(\tilde{\mathcal{S}}_j^{k^*}) \subset \mathcal{W}'$ or $f^{-1}(\tilde{\mathcal{S}}_j^{k^*}) \subset \mathcal{V}'$.

For each $j \in \mathcal{J}$, by \eqref{eq:sjkdefn}, \eqref{eq:tildes}, and the compactness of $cl(\mathcal{A})$, we have
\begin{equation*}
\tilde{\mathcal{S}}_j^{k^*} = A(Bu_j^2 + \cdots + A^{k^*-2} B u_j^{k^*} + cl(A^{k^*-1} \mathcal{A}))
\end{equation*} 
for some $(u_j^2, \dots, u_j^{k^*}) \in \mathcal{U}^{k^*-1}$.
Consequently, we can determine one and only one $j' \in \{1,\dots,q^{k^*-1}\}$ such that 
\begin{equation}
\label{eq:skjconnect}
{\mathcal{S}}_{j'}^{k^*-1} \subset f^{-1} (\tilde{\mathcal{S}}_j^{k^*}).
\end{equation}
We also observe that
\begin{equation}
\label{eq:prev_union}
\bigcup_{j \in \mathcal{J}} (u_j^2, u_j^3, \dots, u_j^{k^*}) = \mathcal{U}^{k^*-1}.
\end{equation}
Recall \eqref{eq:sjktgt}, \eqref{eq:skjconnect}, we see that 
\begin{equation}
\label{eq:induc_union}
cl(\mathcal{A}) = \bigcup_{j' = 1}^{q^{k^*-1}} \mathcal{S}_{j'}^{k^*-1} \subset \bigcup_{j \in \mathcal{J}} f^{-1}(\tilde{\mathcal{S}}_j^{k^*}) \subset \mathcal{W}' \cup \mathcal{V}'.
\end{equation}

It is clear that $\mathcal{W}'$ and $\mathcal{V}'$ are disjoint open sets. Therefore \eqref{eq:sjwhole} holds for $k^* - 1$ and $\mathcal{W}'$, $\mathcal{V}'$. Repeat this argument $k^*-1$ times, we conclude that \eqref{eq:sjwhole1} holds. 
\end{proof}

Finally, we provide the proof of Theorem \ref{sufcon}.
\begin{proof} (of Theorem \ref{sufcon})
Since $cl(\mathcal{A})$ is not connected, let $\mathcal{W}$ and $\mathcal{V}$ be a disconnection of $cl(\mathcal{A})$. Then by Lemma \ref{lem:sjwhole1}, \eqref{eq:sjwhole1} holds. We propose an equivalence relation on $\mathcal{A}$. Since $\mathcal{A}$ is an invariant set of system \eqref{eq:Plant}, the proof is complete if we can show that this equivalence relation satisfies \eqref{def:Dt} and \eqref{def:Fn}.

Given open sets $\mathcal{W}'$ and $\mathcal{V}'$ that satisfy \eqref{eq:sjwhole1}, let $\mathcal{X}_1 = \mathcal{A} \cap \mathcal{W}'$ and $\mathcal{X}_2 = \mathcal{A} \cap \mathcal{V}'$. Define an equivalence relation $\sim$ as
\begin{equation*}
x \sim x' \Leftrightarrow x \in \mathcal{X}_i \ \textrm{and} \ x' \in \mathcal{X}_i \ \textrm{for some} \ i \in \{1,2\}.
\end{equation*}
For any $x, x' \in \mathcal{A}$, any $u_j \in \mathcal{U}$, if $x \sim x'$, then $A x + Bu_j \in \mathcal{S}_j^1$ and $A x' + Bu_j \in \mathcal{S}_j^1$. By \eqref{eq:sjwhole1}, we see that $A x + Bu_j \sim A x' + Bu_j $. Therefore \eqref{def:Dt} is satisfied. Since $1<2<\infty$, \eqref{def:Fn} is also satisfied. This completes the proof. 
\end{proof}

\subsection{Derivation of Theorem \ref{prop:constr}}

In this section, we derive Theorem \ref{prop:constr}. We first show that Algorithm \ref{algo} terminates, and then show that the equivalence classes $\mathcal{X}_1, \mathcal{X}_2$ returned by Algorithm \ref{algo} afford a finite uniform bisimulation on $\mathcal{X}_1 \cup  \mathcal{X}_2$.

\begin{proof} (of Theorem \ref{prop:constr})
Given system \eqref{eq:Plant}, since matrix $A$ has all eigenvalues within the unit disc, and $cl(\mathcal{A})$ is not connected, by Lemma \ref{lem:sjwhole1}, there is a disconnection of $cl(\mathcal{A})$, $\mathcal{W}$ and $\mathcal{V}$, such that for all $j \in \{1,\dots, q\}$
\begin{equation}
\mathcal{S}_j^{1} \cap \mathcal{W} \neq \varnothing \quad \Rightarrow \quad \mathcal{S}_j^{1} \subset \mathcal{W}
\end{equation}
where $q = |\mathcal{U}|$.
Let $\mathcal{U}_1^* = \{u_j \in \mathcal{U} | \mathcal{\mathcal{S}}_j^1 \cap \mathcal{W} \neq \varnothing\}$, and $\mathcal{U}_2^* = \mathcal{U} \setminus \mathcal{U}_1^*$. Recall \eqref{eq:atosj1}, we see that $\mathcal{U}_1^*$ is nonempty, otherwise $cl(\mathcal{A}) \cap \mathcal{W} = \varnothing$, which contradicts with $\mathcal{W}$ and $\mathcal{V}$ being a disconnection of $cl(\mathcal{A})$. $\mathcal{U}_2^*$ is also nonempty, otherwise $cl(\mathcal{A}) \subset \mathcal{W}$, then $cl(\mathcal{A}) \cap \mathcal{V} = \varnothing$, which draws a contradiction. We also observe that $|\mathcal{U}| > 1$, otherwise $\mathcal{U} = 0$ by assumption, and $cl(\mathcal{A}) = 0$ is connected. Therefore the binary partitions of $\mathcal{U}$ are well-defined. Since $\mathcal{U}_1^*$ and $\mathcal{U}_2^*$ are nonempty, disjoint subsets of $\mathcal{U}$, and $\mathcal{U}_1^* \cup \mathcal{U}_2^* = \mathcal{U}$, there is a binary partition of $\mathcal{U}$, $(\mathcal{U}_1^{(i^*)}, \mathcal{U}_2^{(i^*)})$,  such that 
\begin{equation}
\label{eq:uisteq}
(\mathcal{U}_1^{(i^*)}, \mathcal{U}_2^{(i^*)})  = (\mathcal{U}_1^*,  \mathcal{U}_2^*) 
\end{equation}
where $i^*$ is an integer between $1$ and $r$.

Since for any $k \in \mathbb{Z}_+$,
\begin{equation}
\label{eq:dki}
d_k^{(i)} = \min \{\|\alpha - \beta\|_1 : \alpha \in \mathcal{C}_1^{(i)}, \beta \in \mathcal{C}_2^{(i)}\},
\end{equation}
we claim that $d_k^{(i^*)}$ \eqref{eq:dki} is uniformly bounded away from zero, that is: There exists $d > 0$ such that 
\begin{equation}
d_k^{(i^*)} \ge d, \quad \textrm{for all} \quad k \in \mathbb{Z}_+.
\end{equation} 
To see this claim, we define two sets $\mathcal{G}_1$, $\mathcal{G}_2$ by
\begin{equation}
\label{eq:gis}
\mathcal{G}_1 = \bigcup_{j \in \mathcal{U}_1^*} \mathcal{S}_j^1, \quad \mathcal{G}_2 = \bigcup_{j \in \mathcal{U}_2^*} \mathcal{S}_j^1.
\end{equation}
By the definition of $\mathcal{U}_1^*$, we see that $\mathcal{G}_1 \subset \mathcal{W}$. Recall \eqref{eq:atosj1} and that $\mathcal{W}$ and $\mathcal{V}$ is a disconnection of $cl(\mathcal{A})$, we see that $\mathcal{G}_2 \subset \mathcal{V}$. Because $\mathcal{V}$ and $\mathcal{W}$ are disjoint, $\mathcal{G}_1$ and $\mathcal{G}_2$ are also disjoint. Since $\mathcal{G}_1$ is a finite union of closed sets, $\mathcal{G}_1$ is closed. By Lemma \ref{lem:cpt}, $cl(\mathcal{A})$ is bounded, and therefore $\mathcal{G}_1$ is bounded. We see that $\mathcal{G}_1$ is closed, bounded, and therefore compact. Similarly, $\mathcal{G}_2$ is also compact. By an observation in analysis: The distance between two disjoint compact sets is positive (pp. 18, \cite{Stein}), we have
\begin{equation}
d = \inf\{\|\alpha - \beta\|_1: \alpha \in \mathcal{G}_1, \beta \in \mathcal{G}_2\} > 0.
\end{equation}
Since
\begin{equation}
\label{eq:cis}
\begin{aligned}
\mathcal{C}_1^{(i)} &= \{Bu_1 + ABu_2 + \cdots + A^{k-1}Bu_k : u_1 \in \mathcal{U}_1^{(i)}, u_2, \dots , u_k \in \mathcal{U}\} \\ 
\mathcal{C}_2^{(i)} &= \{Bu_1 + ABu_2 + \cdots + A^{k-1}Bu_k : u_1 \in \mathcal{U}_2^{(i)}, u_2, \dots , u_k \in \mathcal{U}\}
\end{aligned}
\end{equation}
and recall \eqref{eq:sjone}, \eqref{eq:uisteq}, and \eqref{eq:gis}, we observe that: For all $k \in \mathbb{Z}_+$, 
\begin{equation}
\mathcal{C}_1^{({i^*})} \subset \mathcal{G}_1, \quad \mathcal{C}_2^{({i^*})} \subset \mathcal{G}_2.
\end{equation}
Recall \eqref{eq:dki}, we have $d_k^{(i^*)} \ge d > 0$ for all $k \in \mathbb{Z}_+$.

Since matrix $A$ is Schur stable, we see that 
$l_k = h \|A^k\|_1 \to 0 \ \textrm{as} \ k \to \infty$. 
Consequently, there exists $k^* \in \mathbb{Z}_+$ such that
\begin{equation*}
d_{k^*}^{(i^*)} \ge \kappa l_{k^*} = \frac{2 \|T\|_1\|T^{-1}\|_1}{1-\rho(A)-\epsilon} l_{k^*}.
\end{equation*} 
Now we see that the loop in Algorithm \ref{algo} terminates, and returns two sets $\mathcal{X}_1, \mathcal{X}_2$:
\begin{equation}
\label{eq:xis}
\begin{aligned}
\mathcal{X}_1 &=  \mathcal{C}_1^{(\tilde{i})} + \mathcal{S},\\
\mathcal{X}_2 &=  \mathcal{C}_2^{(\tilde{i})} + \mathcal{S}.
\end{aligned}
\end{equation}

For the second part of this derivation, we show that $\mathcal{X}_1 \cup \mathcal{X}_2$ is an invariant set of system \eqref{eq:Plant}, and that $\mathcal{X}_1, \mathcal{X}_2$ afford a finite uniform bisimulation on $\mathcal{X}_1 \cup \mathcal{X}_2$.

For any $x \in \mathcal{X}_1 \cup \mathcal{X}_2$, by \eqref{eq:cis} and \eqref{eq:xis}, there exist $(u_1, \dots, u_{\tilde{k}}) \in \mathcal{U}^{\tilde{k}}$ and $s \in \mathcal{S}$ such that 
\begin{equation}
\label{eq:writex}
x = B u_1 + A B u_2 + \cdots + A^{\tilde{k}-1} B u_{\tilde{k}} + s.
\end{equation}
Then for any $u \in \mathcal{U}$,
\begin{equation}
\label{eq:algonext}
A x + B u = (B u +  A B u_1 +  \cdots +  A^{\tilde{k}-1} B u_{\tilde{k} - 1})  + (A^{\tilde{k}} B u_{\tilde{k}} + A s).
\end{equation}

Recall $\|T^{-1}AT\|_1 \le \rho(A) + \epsilon$,  $\kappa = \frac{2 \|T\|_1\|T^{-1}\|_1}{1-\rho(A)-\epsilon}$, and $h = \max \{\|Bu\|_1: u \in \mathcal{U}\}$, we observe that
\begin{equation*}
\begin{aligned}
& \| T^{-1} (A^{\tilde{k}} B u_{\tilde{k}} + A s) \|_1 \\
& \le \|T^{-1} A^{\tilde{k}} B u_{\tilde{k}}\|_1 + \|T^{-1} A s\|_1 \\
& \le \|T^{-1}\|_1 \|A^{\tilde{k}}\|_1 \|B u_{\tilde{k}}\|_1 + \|(T^{-1} A T) T ^{-1} s\|_1\\
& \le \|T^{-1}\|_1 l_{\tilde{k}} + \|(T^{-1} A T) \|_1 \|T ^{-1} s\|_1\\
& < \|T^{-1}\|_1 \frac{1-\rho(A)-\epsilon} {2 \|T\|_1\|T^{-1}\|_1} d_{\tilde{k}}^{(\tilde{i})} + (\rho(A) + \epsilon) \frac{d_{\tilde{k}}^{(\tilde{i})}}{2 \|T\|_1}\\
& = \frac{d_{\tilde{k}}^{(\tilde{i})}}{2 \|T\|_1}.
\end{aligned}
\end{equation*}
Therefore $(A^{\tilde{k}} B u_{\tilde{k}} + A s) \in \mathcal{S}$. By \eqref{eq:cis}, we observe that $(B u + A B u_1 + \cdots + A^{\tilde{k}-1} B u_{\tilde{k} - 1}) \in \mathcal{C}_1^{\tilde{i}} \cup \mathcal{C}_2^{\tilde{i}}$, therefore, we have
\begin{equation}
\label{eq:setinv}
A x + B u \in \mathcal{X}_1 \cup \mathcal{X}_2.
\end{equation}
We conclude that $\mathcal{X}_1 \cup \mathcal{X}_2$ is an invariant set of system \eqref{eq:Plant}.

Next, we show $\mathcal{X}_1 \cap \mathcal{X}_2 = \varnothing$. We show by contradiction: Assume $z \in \mathcal{X}_1 \cap \mathcal{X}_2$, then by \eqref{eq:xis}, there exist $c_1 \in \mathcal{C}_1^{\tilde{i}}$, $c_2 \in \mathcal{C}_2^{\tilde{i}}$, $s_1 \in \mathcal{S}$, and $s_2 \in \mathcal{S}$ such that 
$ z  = c_1 + s_1, \ \textrm{and} \ z  = c_2 + s_2$, and recall $\mathcal{S} = \{x \in \mathbb{R}^n : \|T^{-1} x\|_1 < d_{\tilde{k}}^{(\tilde{i})}/(2 \|T\|_1)\}$, we have 
\begin{equation*}
\begin{aligned}
\|c_1 - c_2\|_1 & \le \|c_1 - z\|_1 + \|z - c_2\|_1\\
& = \|s_1\|_1 + \|s_2\|_1 \\
& = \|T (T^{-1} s_1)\|_1 + \|T (T^{-1} s_2)\|_1 \\
& \le \|T\|_1 (\|T^{-1} s_1\|_1 + \|T^{-1} s_2\|_1) \\
& < d_{\tilde{k}}^{(\tilde{i})}.
\end{aligned}
\end{equation*}
But by \eqref{eq:dki}, we have $\|c_1 - c_2\|_1 \ge d_{\tilde{k}}^{(\tilde{i})}$, which draws a contradiction. Therefore $\mathcal{X}_1 \cap \mathcal{X}_2 = \varnothing$.

Now we are ready to define an equivalence relation $\sim$ on $\mathcal{X}_1 \cup \mathcal{X}_2$ as: 
\begin{equation*}
x \sim x' \Leftrightarrow x \in \mathcal{X}_i \ \textrm{and} \ x' \in \mathcal{X}_i \ \textrm{for some} \  i \in \{1,2\}.
\end{equation*}
We show that $\sim$ is a finite uniform bisimulation on $\mathcal{X}_1 \cup \mathcal{X}_2$. For any $x, x' \in \mathcal{X}_1 \cup \mathcal{X}_2$, and any $u \in \mathcal{U}$, if $x \sim x'$, we consider two cases: If $u \in \mathcal{U}_1^{\tilde{i}}$, recall \eqref{eq:cis}, \eqref{eq:xis}, and \eqref{eq:algonext}, we see that $A x + B u \in \mathcal{X}_1$ and $A x' + B u \in \mathcal{X}_1$, therefore $A x + B u \sim A x' + B u$. Similarly, if $u \in \mathcal{U}_2^{\tilde{i}}$,  then $A x + B u \in \mathcal{X}_2$ and $A x' + B u \in \mathcal{X}_2$, therefore $A x + B u \sim A x' + B u$. Since $(\mathcal{U}_1^{\tilde{i}}, \mathcal{U}_2^{\tilde{i}})$ is a binary partition of $\mathcal{U}$, we see that \eqref{def:Dt} is satisfied. 

Since $\{[x] | x \in \mathcal{X}_1 \cup \mathcal{X}_2\} = \{\mathcal{X}_1, \mathcal{X}_2\}$, we have $|\{[x] | x \in \mathcal{X}_1 \cup \mathcal{X}_2\}| = 2$, and \eqref{def:Fn} is satisfied. Therefore $\sim$ is a finite uniform bisimulation on $\mathcal{X}_1 \cup \mathcal{X}_2$. This completes the proof. 
\end{proof}

\subsection{Derivation of Theorem \ref{sufcons} \& Corollary \ref{prop:many}}
\label{sec:pfmany}

\begin{proof} To show Theorem \ref{sufcons} and Corollary \ref{prop:many}, it suffices to show that Algorithm \ref{algo1} terminates, and that the sets $\mathcal{X}_1, \dots, \mathcal{X}_{q^{\eta + 1}}$:
\begin{equation}
\label{eq:setxmany}
\mathcal{X}_k = Bu_1 + ABu_2 + \cdots + A^{\eta-1}Bu_\eta + A^{\eta} \bar{\mathcal{X}_i}, \ \ 1 \le k \le q^{\eta + 1},
\end{equation}
returned by Algorithm \ref{algo1} afford a finite uniform bisimulation $\sim$ on $\cup_{k = 1}^{q^{\eta + 1}} \mathcal{X}_k$ of system \eqref{eq:Plant}. By Algorithm \ref{algo1}, the number of equivalence classes $q^{\eta + 1}$ is guaranteed to be greater than $z$.

By assumption, $\{S_j^1\}_{j=1}^q$ \eqref{eq:sjone} are disjoint. By Lemma \ref{lem:cpt}, $S_j^1$ is also compact for all $j \in \{1, \dots, q\}$. Since the distance between two disjoint compact sets is positive, we have
\begin{equation*}
\min \{d(S_w^1, S_v^1) : w \neq v, 1 \le w, v \le q \} > 0.
\end{equation*}
Recall 
\begin{equation}
\label{eq:ciss}
\begin{aligned}
\mathcal{C}_1^{(k)} &= \{Bu_1 + ABu_2 + \cdots + A^{k-1}Bu_k :  u_1 = u_{(1)}, u_2, \dots , u_k \in \mathcal{U}\}, \\ 
\mathcal{C}_2^{(k)} &= \{Bu_1 + ABu_2 + \cdots + A^{k-1}Bu_k :  u_1 = u_{(2)}, u_2, \dots , u_k \in \mathcal{U}\}, \\
& \vdots \\
\mathcal{C}_q^{(k)} &= \{Bu_1 + ABu_2 + \cdots + A^{k-1}Bu_k :  u_1 = u_{(q)}, u_2, \dots , u_k \in \mathcal{U}\}, 
\end{aligned}
\end{equation}
we observe that $\mathcal{C}_j^{(k)}$ \eqref{eq:ciss} is a subset of $S_j^1$ for any $ j \in \{1, \dots, q\}$ and any $k \in \mathbb{Z}_+$, therefore $d_k = \min \{\|\alpha - \beta\|_1 : \alpha \in \mathcal{C}_v^{(i)}, \beta \in \mathcal{C}_w^{(i)}, w \neq v, 1 \le w, v \le q \}$ is uniformly bounded away from zero: 
\begin{equation}
d_k \ge \min \{d(S_w^1, S_v^1) : w \neq v, 1 \le w, v \le q \} > 0, \ \forall \ k \in \mathbb{Z}_+.
\end{equation}
Since $l_k$ tends to zero as $k$ tends to infinity, we see that Algorithm \ref{algo1} terminates.

Recall
\begin{equation}
\label{eq:setxb}
\begin{aligned}
\bar{\mathcal{X}_1} &=  \mathcal{C}_1^{(\tilde{k})} + \mathcal{S},\\
\bar{\mathcal{X}_2} &=  \mathcal{C}_2^{(\tilde{k})} + \mathcal{S},\\
& \vdots \\
\bar{\mathcal{X}_q} &=  \mathcal{C}_q^{(\tilde{k})} + \mathcal{S}, 
\end{aligned}
\end{equation}
we observe that $\bar{\mathcal{X}_1}, \dots, \bar{\mathcal{X}_q}$ afford a finite uniform bisimulation on $\cup_{j = 1} ^ {q} \bar{\mathcal{X}_j}$ of system \eqref{eq:Plant} by the derivation of Theorem \ref{prop:constr}. We will use this observation to show that sets $\mathcal{X}_1, \dots, \mathcal{X}_{q^{\eta + 1}}$ \eqref{eq:setxmany} also afford a finite uniform bisimulation. 

We first show that $\cup_{k = 1}^{q^{\eta + 1}} \mathcal{X}_k$ is an invariant set of system \eqref{eq:Plant}. For any $x \in \mathcal{X}_k$, by \eqref{eq:setxmany}, we can write 
\begin{equation*}
x = Bu_1 + ABu_2 + \cdots + A^{\eta-1}Bu_\eta + A^{\eta} \bar{x}
\end{equation*}
for some $(u_1, \dots, u_\eta) \in \mathcal{U}^\eta$ and some $\bar{x} \in \bar{\mathcal{X}_i}$ with $1 \le i \le q$. Then for any $u \in \mathcal{U}$, 
\begin{equation*}
Ax + Bu = Bu + ABu_1 + A^2 Bu_2 + \cdots + A^{\eta-1}Bu_{\eta-1} + A^{\eta} (A \bar{x} + Bu_\eta).
\end{equation*}
Since $\cup_{j = 1} ^ {q} \bar{\mathcal{X}_j}$ is an invariant set of system \eqref{eq:Plant}, we have $(A \bar{x} + Bu_\eta) \in \bar{\mathcal{X}_j}$ for some $1 \le j \le q$. Recall \eqref{eq:setxmany}, we see that $(Ax + Bu) \in \mathcal{X}_k$ for some $1\le k\le q^{\eta + 1}$, and therefore $\cup_{k = 1}^{q^{\eta + 1}} \mathcal{X}_k$ is an invariant set of system \eqref{eq:Plant}.

Next, we use an inductive approach to show that the sets $\mathcal{X}_k$,  $k = 1,\dots, q^{\eta + 1}$ \eqref{eq:setxb} are disjoint. Write $\mathcal{U} = \{u_{(1)}, \dots, u_{(q)}\}$, we observe that the $q^2$ sets $Bu_{(i)} + A\bar{\mathcal{X}_j}$, $i = 1,\dots,q $, $j = 1, \dots, q$ are disjoint. Indeed, consider any $Bu_{(i^1)} + A\bar{\mathcal{X}}_{j^1}$ and $Bu_{(i^2)} + A\bar{\mathcal{X}}_{j^2}$ with $(i^1, j^1) \neq (i^2, j^2)$. If $i^1 = i^2$, then $j^1 \neq j^2$. Since $\bar{\mathcal{X}_1}, \dots, \bar{\mathcal{X}_q}$ are disjoint, we have  $\bar{\mathcal{X}}_{j^1} \cap \bar{\mathcal{X}}_{j^2} = \varnothing$. Since $A$ is invertible by assumption, we have $A \bar{\mathcal{X}}_{j^1} \cap A \bar{\mathcal{X}}_{j^2} = \varnothing$, and therefore $(Bu_{(i^1)} + A\bar{\mathcal{X}}_{j^1}) \cap (Bu_{(i^2)} + A\bar{\mathcal{X}}_{j^2}) = \varnothing$. If $i^1 \neq i^2$, from the second part of the derivation of Theorem \ref{prop:constr} (equation \eqref{eq:writex} through \eqref{eq:setinv}) and the construction of $\bar{\mathcal{X}}_{j}$ \eqref{eq:ciss}, \eqref{eq:setxb}, we see that $(Bu_{(i^1)} + A\bar{\mathcal{X}}_{j^1}) \subset \bar{\mathcal{X}}_{i^1}$ and $(Bu_{(i^2)} + A\bar{\mathcal{X}}_{j^2}) \subset \bar{\mathcal{X}}_{i^2}$. Since $\bar{\mathcal{X}_1}, \dots, \bar{\mathcal{X}_q}$ are disjoint, we have  $\bar{\mathcal{X}}_{i^1} \cap \bar{\mathcal{X}}_{i^2} = \varnothing$, and therefore $(Bu_{(i^1)} + A\bar{\mathcal{X}}_{j^1}) \cap (Bu_{(i^2)} + A\bar{\mathcal{X}}_{j^2}) = \varnothing$. We conclude that the sets $Bu_{(i)} + A\bar{\mathcal{X}_j}$, $i = 1,\dots,q $, $j = 1, \dots, q$ are disjoint, where $\mathcal{U} = \{u_{(1)}, \dots, u_{(q)}\}$.

For the ease of exposition, we use $\mathcal{X}_{j}^1$, $j = 1, \dots, q^2$ to denote the $q^2$ disjoint sets $Bu_{(i)} + A\bar{\mathcal{X}_j}$, $i = 1,\dots,q $, $j = 1, \dots, q$. We observe that the $q^3$ sets $Bu_{(i)} + A\mathcal{X}_j^1$, $i = 1,\dots,q $, $j = 1, \dots, q^2$ are also disjoint. Indeed, consider any $Bu_{(i^1)} + A\mathcal{X}_{j^1}^1$ and $Bu_{(i^2)} + A\mathcal{X}_{j^2}^1$ with $(i^1, j^1) \neq (i^2, j^2)$. If $i^1 = i^2$, then $j^1 \neq j^2$. Since $\mathcal{X}_{j}^1$, $j = 1, \dots, q^2$ are disjoint, we have  $\mathcal{X}_{j^1}^1 \cap \mathcal{X}_{j^2}^1 = \varnothing$. Since $A$ is invertible by assumption, we have $A \mathcal{X}_{j^1}^1 \cap A \mathcal{X}_{j^2}^1 = \varnothing$, and therefore $(Bu_{(i^1)} + A\mathcal{X}_{j^1}^1) \cap (Bu_{(i^2)} + A\mathcal{X}_{j^2}^1) = \varnothing$. If $i^1 \neq i^2$, by the preceding paragraph, we see that $\mathcal{X}_{j^1}^1 \subset \bar{\mathcal{X}}_l$ for some $1 \le l \le q$, and therefore
\begin{equation*}
(Bu_{(i^1)} + A\mathcal{X}_{j^1}^1) \subset (Bu_{(i^1)} + A \bar{\mathcal{X}}_l) \subset \bar{\mathcal{X}}_{i^1}.
\end{equation*}
Similarly, we see that $(Bu_{(i^2)} + A\mathcal{X}_{j^2}^1) \subset \bar{\mathcal{X}}_{i^2}$. Since $\bar{\mathcal{X}_1}, \dots, \bar{\mathcal{X}_q}$ are disjoint, we have  $\bar{\mathcal{X}}_{i^1} \cap \bar{\mathcal{X}}_{i^2} = \varnothing$, and therefore $(Bu_{(i^1)} + A\mathcal{X}_{j^1}^1) \cap (Bu_{(i^2)} + A\mathcal{X}_{j^2}^1) = \varnothing$. We conclude that the sets $Bu_{(i)} + A{\mathcal{X}_j^1}$, $i = 1,\dots,q $, $j = 1, \dots, q^2$ are disjoint.

Repeating this argument $\eta$ times, we conclude that the $q^{\eta + 1}$ sets $\mathcal{X}_k$,  $k = 1,\dots, q^{\eta + 1}$ \eqref{eq:setxb} are disjoint.

Next, we define an equivalence relation $\sim$ on $\cup_{k = 1}^{q^{\eta + 1}} \mathcal{X}_k$ as
\begin{equation*}
x \sim y \iff x \in \mathcal{X}_k \ \textrm{and} \ y \in \mathcal{X}_k \ \textrm{for some} \ 1 \le k \le q^{\eta + 1}.
\end{equation*}
We claim that $\sim$ is a finite uniform bisimulation. Indeed, for any $1 \le k \le q^{\eta + 1}$, by \eqref{eq:setxmany}, write $\mathcal{X}_k$ as $\mathcal{X}_k = Bu_1 + ABu_2 + \cdots + A^{\eta-1}Bu_\eta + A^{\eta} \bar{\mathcal{X}_i}$. 
Then for any $u \in \mathcal{U}$,
$A \mathcal{X}_k + Bu = Bu +  ABu_1 + A^2 Bu_2 + \cdots + A^{\eta-1}Bu_{\eta-1} + A^{\eta} (A \bar{\mathcal{X}_i} + B u_{\eta})$.
Since $(A \bar{\mathcal{X}_i} + B u_{\eta}) \subset \bar{\mathcal{X}_j}$ for some $1 \le j \le q$, we have
\begin{equation*}
(A \mathcal{X}_k + Bu) \subset (Bu +  ABu_1 + A^2 Bu_2 + \cdots + A^{\eta-1}Bu_{\eta-1} + A^{\eta} \bar{\mathcal{X}_j}) = \mathcal{X}_{k'}
\end{equation*}
for some $1 \le k' \le q^{\eta + 1}$. Therefore \eqref{def:Dt} is satisfied. Since $q^{\eta + 1}$ is finite, \eqref{def:Fn} is also satisfied. This completes the proof of Theorem \ref{sufcons} and Corollary \ref{prop:many}.

Lastly, we comment on the fact that the diameter of the equivalence classes $\mathcal{X}_k$ can be made arbitrarily small. For any $1 \le k  \le q^{\eta + 1}$, we have 
\begin{equation*}
diam(\mathcal{X}_k) \le \|A^\eta\|_1 diam(\mathcal{C}_i^{(\tilde{k})} + \mathcal{S}) \le \|A^\eta\|_1 (diam(\mathcal{A}) + diam(\mathcal{S}))
\end{equation*}
Since $A$ is Schur-stable, $diam(\mathcal{A})$ is finite, and $\|A^\eta\|_1$ can be made arbitrarily small by choosing $\eta$ large enough. $diam(\mathcal{S})$ is finite by construction, and we conclude that $diam(\mathcal{X}_k)$ can be made arbitrarily small by choosing $\eta$ sufficiently large.
\end{proof}

\subsection{Derivation of Necessary Conditions}
\begin{proof} (of Theorem \ref{neconnd})
We will prove by contradiction. Assume $\rho(A) > 1$, let $Av = \lambda v$ with $|\lambda| > 1$, $\|v\|_1 = 1$, $\lambda \in \mathbb{C}$, $v \in \mathbb{C}^n$. And for any $w \in \mathbb{C}^n$, we use $Re(w)$ to denote the real part of $w$. Define a set $\mathcal{O}$ as
\begin{equation}
\label{eq:seto}
\mathcal{O} = \{ \alpha \in \mathbb{R}_+ | Re(\gamma v) \in [0], \ \textrm{for all} \ |\gamma| \le \alpha, \gamma \in \mathbb{C}\}.   
\end{equation}
We show that $\mathcal{O}$ is non-empty and bounded in the following. Write $v = [v_1 \ v_2 \ \dots \ v_n]^T$, where $v_1, \dots, v_n \in \mathbb{C}$ and $|v_1| + \dots + |v_n| = 1$. For any $\gamma \in \mathbb{C}$, we have $|Re(\gamma v_i)| \le |\gamma\|v_i|$, therefore
\begin{equation*}
\|Re(\gamma v) \|_1 = \sum_{i=1}^n |Re(\gamma v_i)| \le |\gamma| \sum_{i=1}^n |v_i| = |\gamma|.
\end{equation*}
Since $B_r(0) \subset [0]$ for some $r>0$ by assumption, for all $\gamma$ with $|\gamma| \le r/2$, $Re(\gamma v) \in B_r(0)$. Therefore $r/2 \in \mathcal{O}$, and $\mathcal{O}$ is nonempty.

Next, we show that $\mathcal{O}$ is bounded. Since $[0]$ is bounded by assumption, let $[0] \subset B_\sigma (0)$ for some $\sigma >0$. Since $v = [v_1 \ v_2 \ \dots \ v_n]^T \neq 0_{n \times 1}$, let $|v_k| >0$ for some $1 \le k \le n$. Write $v_k$ as $v_k = |v_k| e^{i \phi}$ for some $\phi \in [0, 2 \pi)$. Assume $\mathcal{O}$ is unbounded, then there exist $\alpha \in \mathcal{O}$ with $|\alpha| > 2\sigma / |v_k|$. Let $\gamma = ({2\sigma}/{|v_k|}) e^{i(-\phi)}$, then $|\gamma| < \alpha$. By the definition of $\mathcal{O}$ \eqref{eq:seto}, we have $Re(\gamma v) \in [0]$. Observe that
\begin{equation*}
\|Re(\gamma v)\|_1 \ge |Re(\gamma v_k)| = |Re(\frac{2\sigma}{|v_k|} e^{i(-\phi)} |v_k| e^{i \phi})| = |Re(2\sigma)| = 2\sigma.
\end{equation*}
Therefore $Re(\gamma v) \notin B_\sigma (0)$, and consequently $Re(\gamma v) \notin [0]$, which draws a contradiction. Therefore $\mathcal{O}$ is bounded.

Next, we define $\beta = \sup \mathcal{O}$. Since $\mathcal{O}$ is non-empty and bounded, we have $0 < \beta < \infty$. Then for any $\epsilon > 0$, there is $0 \le \delta < \epsilon$ such that $Re(\kappa v) \notin [0]$ for some $\kappa \in \mathbb{C}$ and $|\kappa| = \beta + \delta$. Choose $\epsilon = (\frac{|\lambda|-1}{2}) \beta$, and let $\kappa' = \frac{\kappa}{\lambda}$, then
\begin{equation*}
|\kappa'| = \frac{|\kappa|}{|\lambda|} = \frac{\beta +\delta}{|\lambda|} < \frac{\beta +\epsilon}{|\lambda|} < \frac{\beta + (|\lambda|-1) \beta}{|\lambda|} = \beta.
\end{equation*}
Therefore $|\kappa'| < \beta$. Since $\beta = \sup \mathcal{O}$, there exists $\alpha \in \mathcal{O}$ such that $\alpha > |\kappa'|$. By \eqref{eq:seto}, we see that $Re(\kappa' v) \in [0]$, or equivalently $Re(\kappa' v) \sim 0$. Since $\sim$ is a finite uniform bisimulation, by \eqref{def:Dt} and letting the input $u$ be zero, we have $A Re(\kappa' v) \sim 0$. We observe that 
\begin{equation*}
A Re(\kappa' v) = Re(A \kappa' v) = Re(\kappa' (Av)) = Re(\kappa' \lambda v) = Re (\kappa v),
\end{equation*}
therefore $Re (\kappa v) \sim 0$ ,which draws a contradiction. We conclude that the assumption $\rho(A) > 1$ is false, and therefore $\rho(A) \le 1$.

\end{proof}

\begin{proof} (of Corollary \ref{necon})
We will prove by contradiction. Assume $|a| > 1$, and use $[0]$ to denote the equivalence class
$[0] = \{x \in \mathcal{S}| x \sim 0\}$.
By the assumption $B_r(0) \subset [0]$ for some $r>0$, define $\beta$ as 
\begin{equation}
\label{eq:defn_M}
\beta = \sup \{x \in \mathcal{S}| [0, x] \subset [0]\},
\end{equation}
where $[0,x]$ is the closed interval between $0$ and $x$. Since $int([0])$ is nonempty, there is $\epsilon$ such that $[0, \epsilon) \subset [0]$, therefore the supremum is well defined, and $\beta>0$. 

First, we consider the case $\beta < \infty$. Clearly $[0,\beta) \subset [0]$. By the definition of $\beta$, we have that for any $\epsilon >0$, there is $0 \le \delta<\epsilon $ such that
\begin{equation}
\label{eq:dniz}
\beta+\delta \notin [0].
\end{equation}
Let $\epsilon = (a^2-1)\beta > 0$, and let $\delta$ denote the nonnegative number that satisfy \eqref{eq:dniz}. We observe that
\begin{equation*}
z = \frac{\beta+\delta}{a^2} < \beta,
\end{equation*}
therefore $z \sim 0$. Since $\sim$ is a finite uniform bisimulation, when the input is $0$ we have $az \sim 0$, and $a^2 z \sim 0$. This draws a contradiction with \eqref{eq:dniz}.

For the case $\beta = \infty$, let 
$\beta' = \inf \{ x \in \mathcal{S} | [x, 0] \subset [0]\}$, then $\beta' > -\infty$, otherwise for any $x \in \mathbb{R}$, $x \in [0]$, which implies $\mathbb{R} = [0]$ and there is only one equivalence class. Next, for any $\epsilon >0$, there is $0 \le \delta<\epsilon $ such that $\beta' - \delta \notin [0]$. Choose $\epsilon = (1-a^2)\beta'$ and $z = (\beta' - \delta)/a^2$, then the preceding argument follows.  
\end{proof}



\section{Illustrative Examples}
\label{sec:eg}
In this section, we present a set of illustrative examples: In Example \ref{eg:Tab}, we illustrate the difference between the notion of finite uniform bisimulation and the notion of finite bisimulation stated in \cite{tabuada06}; in Example \ref{eg:addn}, we show that additional assumptions, besides the conditions in Theorem \ref{sufcon}, are needed to guarantee the existence of arbitrarily many equivalence classes; in Example \ref{eg:nec_nd}, we show that the condition ``$[0]$ is bounded" in Theorem \ref{neconnd} cannot be dropped; in Example \ref{eg:2d}, we illustrate the analytical result in Theorem \ref{sufcon}, discuss how to construct a DFM approximation  of the original system, and apply Algorithm \ref{algo1} to construct many equivalence classes.

\begin{eg} (Example 2.14, \cite{tabuada06}) 
\label{eg:Tab}
Consider system \eqref{eq:Plant} with parameters
\begin{equation*}
A = \left[ \begin{array}{ccc}
2 & 0 & -1\\
-1 & -7 & 11\\
0 & 4 & 6 \end{array} \right], \quad
B = \left[ \begin{array}{cc}
1 & 2 \\
1 & 1\\
1 & 1 \end{array} \right]
\end{equation*}
\end{eg}

According to \cite{tabuada06}, a finite bisimulation with eight equivalence classes $\{q_1,\dots,q_8\}$ is constructed.
 If we choose $x = [1 \  -2 \ -3]^T \in q_1$, $x' = [8,-18,-24]^T \in q_1$ and let input $u = [0 \ 60]^T$, then $Ax + Bu = [125 \ 40 \ 34]^T \in q_2$, and $Ax' + Bu = [160 \ -86 \ -156]^T \in q_1$. Therefore this finite bisimulation is not a ``finite uniform bisimulation" as defined in Definition \ref{def:FB}.

\begin{eg}
\label{eg:addn}
Consider system \eqref{eq:Plant} with parameters 
\begin{equation}
\label{eq:planteg}
A = \left[ \begin{array}{cc}
0.5 & 0 \\
0 & 0 \end{array} \right], \quad
B = \left[ \begin{array}{cc}
1 & 0 \\
0 & 1 \end{array} \right]
\end{equation} 
and
\begin{equation*}
\mathcal{U} = \left\{
\left[ \begin{array}{c}
1 \\
0 \end{array} \right],
\left[ \begin{array}{c}
0 \\
1 \end{array} \right],
\left[ \begin{array}{c}
0 \\
-1 \end{array} \right],
\left[ \begin{array}{c}
0 \\
0 \end{array} \right]
\right\}
\end{equation*}
We calculate, and plot $cl(\mathcal{A})$:
\begin{equation}
cl(\mathcal{A}) = \{(x,y) \in \mathbb{R}^2 | x = 0, -2 \le y \le 2\} \cup \{(x,y) \in \mathbb{R}^2 | x = 1, -1 \le y \le 1\}
\end{equation}
\begin{figure}[H]
\centering
\includegraphics[scale = 0.4]{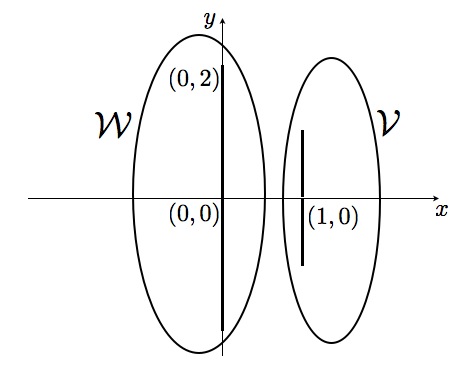} 
\caption{2 and only 2 equivalence classes. \label{fig:adas}}
\end{figure}
In the above figure, $\mathcal{W}$ and $\mathcal{V}$ represents a disconnection of $cl(\mathcal{A})$. We see that both $cl(\mathcal{A}) \cap \mathcal{W}$ and $cl(\mathcal{A}) \cap \mathcal{V}$ are connected. Therefore, we cannot apply the analytical result in Theorem \ref{sufcon} to generate more than two equivalence classes, because such result relies on the disconnectedness of an invariant set.
\end{eg}

\begin{eg}
\label{eg:nec_nd}
Given system \eqref{eq:Plant} with parameters: $A = diag(\{2, 0.5\})$ (a diagonal matrix with diagonal entries 2 and 0.5), $B$ is the identity matrix, and $\mathcal{U} = \{[0 \ 0 ]^T\}$. Let $\mathcal{X}_1 = \{(x,y) \in \mathbb{R}^2 : 1 < |y| < 2\}$, and $\mathcal{X}_2 = \{(x,y) \in \mathbb{R}^2 :  |y| < 1\}$, then we see that $\mathcal{X}_1, \mathcal{X}_2$ afford a regular finite uniform bisimulation on $\mathcal{X}_1 \cup \mathcal{X}_2$, which is an invariant set, and $B_r(0) \subset [0]$ for $r = 0.5$, and $\rho(A) = 2 > 1$.
\end{eg}

\begin{eg}
\label{eg:2d}
Consider system \eqref{eq:Plant} with parameters:

\begin{equation}
\label{eq:planteg}
A = \left[ \begin{array}{cc}
0.25 & -0.15 \\
0 & 0.1 \end{array} \right], \quad
B = \left[ \begin{array}{cc}
1 & 0 \\
0 & 1 \end{array} \right]
\end{equation} 
and
\begin{equation*}
\mathcal{U} = \left\{
\left[ \begin{array}{c}
1 \\
0 \end{array} \right],
\left[ \begin{array}{c}
-1 \\
0 \end{array} \right],
\left[ \begin{array}{c}
0 \\
1 \end{array} \right],
\left[ \begin{array}{c}
0 \\
-1 \end{array} \right],
\left[ \begin{array}{c}
0 \\
0 \end{array} \right]
\right\}
\end{equation*}
\end{eg}

Since $A$ is diagonalizable, we have
\begin{equation*}
A^n = \left[ \begin{array}{cc}
(1/4)^n & (1/10)^n - (1/4)^n \\
0 & {(1/10)^n} \end{array} \right], 
n = 0, 1, 2, \cdots
\end{equation*}
and we can show that $cl(\mathcal{A})$ is a subset of:
\begin{equation*}
\bigcup \{ (\pm 1, \pm 1), (0,0) \} + \{(x, y) : x \in [-\frac{4}{9}, \frac{4}{9}], \ y \in [-\frac{1}{9}, \frac{1}{9}]\}
\end{equation*}
Therefore $cl(\mathcal{A})$ is not connected. 

By the derivation of Theorem \ref{sufcon}, we find a finite uniform bisimulation $\sim$ on an invariant set of this system:
\begin{figure}[H]
\centering
\includegraphics[scale = 0.3]{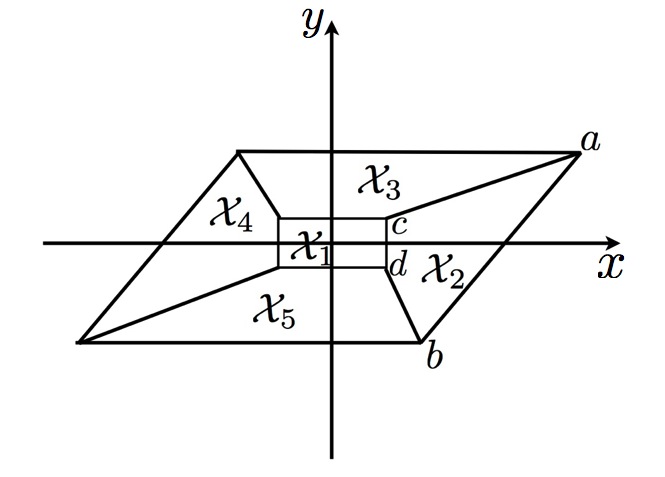} 
\caption{2-d finite uniform bisimulation example. \label{fig:tdeg}}
\end{figure}
$\mathcal{X}_1, \dots, \mathcal{X}_5$ shown in Figure \ref{fig:tdeg} afford a finite uniform bisimulation $\sim$ on an invariant set $\mathcal{S} = \cup_{i =1}^5 \mathcal{X}_i$ of system \eqref{eq:Plant}. The points $a,b,c,d$ are given by:
\begin{equation*}
a = (\frac{22}{9}, \frac{10}{9}), b = (\frac{10}{9}, -\frac{10}{9}), c = (\frac{4}{9}, \frac{1}{9}), d = (\frac{4}{9}, -\frac{1}{9})
\end{equation*}
and Figure \ref{fig:tdeg} is symmetric with respect to the origin. Particularly, the set $\mathcal{S}$ is the convex hull of points: $\{a,b,-a,-b\}$. 

Given $\sim$, we can construct a DFM that is uniformly bisimilar to the original system. Particularly, we associate each equivalence class $\mathcal{X}_i$  to a discrete state $q_i$ of the DFM, $i = 1, \dots, 5$. The state transitions of the DFM can be determined based on \eqref{eq:fsmdef}: For instance, if the current state of the DFM is $q_1$, and the  current input is $[0 \quad 1]^T$, then the next state of the DFM is $q_3$.

Since this example also satisfies the conditions in Theorem \ref{sufcons}, we can also use Algorithm \ref{algo1} to generate a finite uniform bisimulation with an arbitrarily large number of equivalence classes. In particular, we generate two finite uniform bisimulations with 5 equivalence classes, and 25 equivalence classes respectively.

\begin{figure}[H]
\centering
\begin{subfigure}[t]{.25\textwidth}
  \centering
  \includegraphics[width=1.2\linewidth]{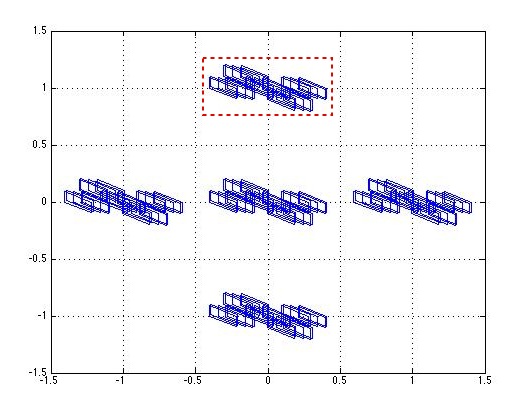}
  \caption{5 equivalence classes.}
  \label{fig:sub1}
\end{subfigure}
\centering
\begin{subfigure}[t]{.5\textwidth}
  \centering
  \includegraphics[width=.62\linewidth]{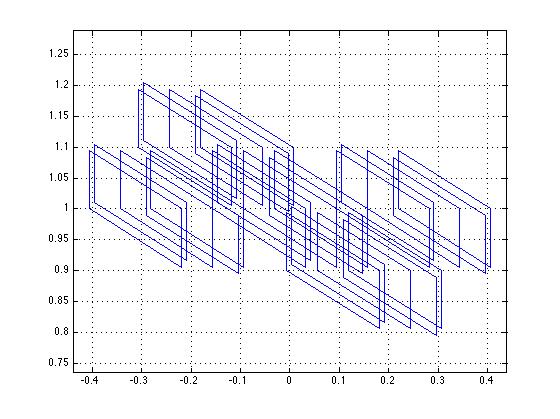}
  \caption{Zoom in on 1 equivalence class.}
  \label{fig:sub2}
\end{subfigure}

\centering
\begin{subfigure}[t]{.25\textwidth}
  \centering
  \includegraphics[width=1.17\linewidth]{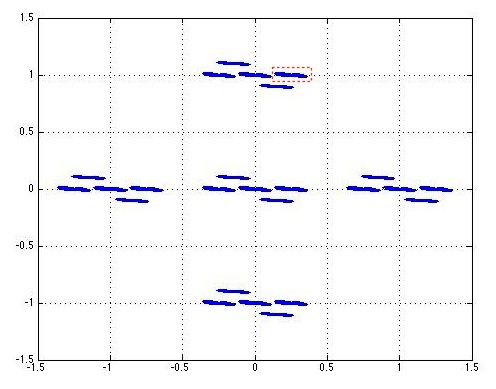}
  \caption{25 equivalence classes.}
  \label{fig:sub3}
\end{subfigure} \quad
\begin{subfigure}[t]{.5\textwidth}
  \centering
  \includegraphics[width=0.63\linewidth]{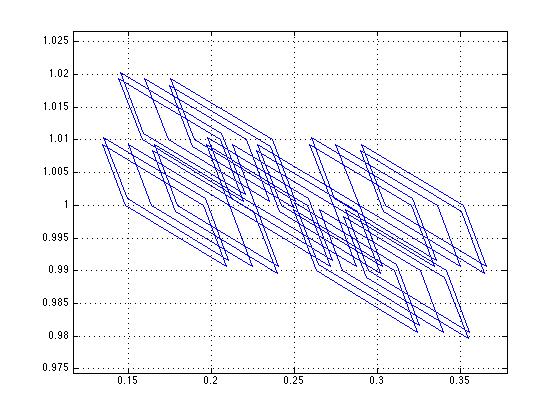}
  \caption{Zoom in on 1 equivalence class.}
  \label{fig:sub4}
\end{subfigure}
\caption{Finite uniform bisimulations with many equivalence classes.}
\label{fig:egalg}
\end{figure}

In the above, Figure \ref{fig:sub1} shows the 5 equivalence classes generated by Algorithm \ref{algo1}, and Figure \ref{fig:sub2} shows one particular equivalence class (the boxed rectangular area in Figure \ref{fig:sub1}). Similarly Figure \ref{fig:sub3} shows the 25 equivalence classes, and Figure \ref{fig:sub4} shows one particular equivalence class. As shown in Figure \ref{fig:sub2} and Figure \ref{fig:sub4}, an equivalence class computed by Algorithm \ref{algo1} is the {union} of all the polytopes (in this case parallelograms). This is in accordance with the construction of the equivalence classes.


\section{Conclusions}
\label{sec:5}
In this paper we propose notions of finite uniform bisimulation and regular finite uniform bisimulation. We then present a sufficient condition for the existence of finite uniform bisimulations: If the forced response of a Schur stable  system is not connected, then the system admits a finite uniform bisimulation. In this case, we construct an algorithm to compute finite uniform bisimulations. Furthermore, we discuss the existence and construction of an arbitrarily large number of equivalence classes. We also present a necessary condition for the existence of regular finite uniform bisimulation. Future works include closing the gap between necessary conditions and sufficient conditions, and extending the current result to systems with more general dynamics.

\color{black}
\section{Appendix}

For the sake of completeness,
we review here relevant mathematical concepts and notation, 
beginning with the concept of equivalence relations \cite{Nicholson}. 
Given a set $\mathcal{A}$, a subset $\sim$ of $\mathcal{A} \times \mathcal{A}$ is called a relation on $\mathcal{A}$. 
With some slight abuse of notation, 
we write $a \sim b$, read $a$ is equivalent to $b$, to mean that $(a,b)$ is an element of the relation $\sim$. 
A relation $\sim$ on $\mathcal{A}$ is an equivalence relation if for any $a, b, c \in \mathcal{A}$, we have:
\begin{equation*}
\begin{aligned}
(i) \quad & a \sim a \quad \textrm{(reflexive)}, \\ 
(ii) \quad & \textrm{If } a \sim b, \textrm{then } b \sim a \quad \textrm{(symmetric)}, \\
(iii) \quad & \textrm{If } a \sim b \textrm{ and } b \sim c, \textrm{then } a \sim c \quad\textrm{(transitive)}.
\end{aligned}
\end{equation*}

An equivalence relation $\sim$ on a set $\mathcal{A}$ can be used to partition $\mathcal{A}$ into equivalence classes.  
We use $[x]$ to denote the equivalence class of $x$, defined as $[x] = \{y \in \mathcal{A} | y \sim x\}$. Note that this indeed defines a partition as the following properties are satisfied:
\begin{equation*}
\begin{aligned}
&(i) \quad [x]  \neq \varnothing, \forall x \in \mathcal{A}, \\
&(ii) \quad [x] \neq [y]  \Rightarrow [x] \cap [y] = \varnothing, \\
&(iii) \quad \bigcup_{x \in \mathcal{A}} [x]  = \mathcal{A}.
\end{aligned}
\end{equation*}

Next, we review relevant concepts in analysis. A point $x \in \mathbb{R}^n$ consists of an $n-$tuple of real numbers $x = (x_1, x_2, \dots, x_n)$.
For the purpose of illustration, we use the $1$-norm to review relevant concepts. The $1$-norm of $x$ is denoted by $\|x\|_1$ and is defined as
$\|x\|_1 = (|x_1| + \cdots + |x_n|)$.
The distance between two points $x$ and $y$ is then simply $\|x-y\|_1$. 
Given a set $\mathcal{A}$ in $\mathbb{R}^n$, 
the diameter of $\mathcal{A}$ is defined as 
$diam(\mathcal{A}) = \sup\{\|y-x\|_1: x \in \mathcal{A}, y \in \mathcal{A}\}$. 
The open ball in $\mathbb{R}^n$ centered at $x$ and of radius $r$ is defined by
$B_r(x) = \{y \in \mathbb{R}^n : \|y-x\|_1 < r\}$.
Given a set $\mathcal{A}$ in $\mathbb{R}^n$, 
a point $x$ is a closure point of $\mathcal{A}$ if for every $r>0$, 
the ball $B_r(x)$ contains a point of $\mathcal{A}$. 
Similarly, a point $x$ is a limit point of $\mathcal{A}$ if for every $r>0$, 
the ball $B_r(x)$ contains a point of $\mathcal{A}$ that is distinct from $x$. 
The closure of $\mathcal{A}$, $cl(\mathcal{A})$, consists of all closure points of $\mathcal{A}$. A point $x \in \mathcal{A}$ is an interior point of $\mathcal{A}$ if there exists $r>0$ such that $B_r(x) \subset \mathcal{A}$. The interior of $\mathcal{A}$,  $int(\mathcal{A})$, consists of all interior points of $\mathcal{A}$. A boundary point of $\mathcal{A}$ is a point which is in $cl(\mathcal{A})$ but not in $int(\mathcal{A})$. The boundary of $\mathcal{A}$, $\partial \mathcal{A}$, consists of all boundary points of $\mathcal{A}$.

Lastly, we review the notion of spectral radius of a square matrix. Given a square matrix $A$, the spectral radius of $A$ is the nonnegative real number $\rho(A) = \max \{ |\lambda|: \lambda ~ \textrm{is an eigenvalue of} ~ A \}$. 
Given a square matrix $A$, the 1-induced norm of $A$ is defined as 
$\displaystyle \|A\|_1 = \max_{\|x\|_1 = 1} \|Ax\|_1$. Recall that induced norms satisfy the sub-multiplicative property: $\|AB\|_1 \le \|A\|_1 \|B\|_1$.

\color{black}
\section{Acknowledgments}
This research was supported by NSF CAREER award 0954601 and AFOSR Young Investigator award FA9550-11-1-0118.

\bibliographystyle{abbrv}
\bibliography{bib1.bib}  


\end{document}